\definecolor{orange}{rgb}{1,0.5,0}
\newtheorem{theorem}{Theorem}[section]
\newtheorem{proposition}[theorem]{Proposition}
\newtheorem{lemma}[theorem]{Lemma}
\newtheorem{corollary}[theorem]{Corollary}
\newcommand{\lam}{\lambda}
\newcommand{\bet}{\beta}
\newcommand{\gam}{\gamma}
\newcommand{\eps}{\varepsilon}
\newcommand{\alp}{\alpha}
\newcommand{\Ome}{\Omega}
\newcommand{\HH}{{\mathcal H}}
\DeclareMathOperator*{\diam}{diam}
\newcommand{\R}{\mathbb{R}}
\renewcommand{\phi}{\varphi}
\newcommand{\rring}[1]%
{{\mathop{#1}\limits^{\raisebox{-0.2ex}[0ex][0ex]{\tiny{oo}}}}}
\def\XXint#1#2#3{{\setbox0=\hbox{$#1{#2#3}{\int}$}
\vcenter{\hbox{$#2#3$}}\kern-.5\wd0}}
\newcommand{\upref}[2]{\hspace{-0.8ex}\stackrel{\eqref{#1}}{#2}}
\newcommand{\lupref}[2]{\hspace{0ex} \stackrel{\eqref{#1}}{#2}}
\newcommand{\ncoL}[2]{\|#1\|_{L^\infty({#2})}}
\newcommand{\nlt}[1]{\|#1\|_{L^2}}
\newcommand{\nltL}[2]{\|#1\|_{L^2({#2})}}
\newcommand{\NL}{V}
\numberwithin{equation}{section}
\begin{document}

\title{\bf On an isoperimetric problem with a competing non-local
  term. II. The general case.}

\author{Hans Kn\"upfer\thanks{Hausdorff Centre for Mathematics, University of
    Bonn, 53117 Bonn, Germany} \and Cyrill B. Muratov\thanks {Department of
    Mathematical Sciences, New Jersey Institute of Technology, Newark, NJ
    07102}}

\maketitle

\begin{abstract}
  This paper is the continuation of [H. Kn\"upfer and C. B. Muratov,
  Commun. Pure Appl. Math. (2012, to be published)].  We investigate the
  classical isoperimetric problem modified by an addition of a non-local
  repulsive term generated by a kernel given by an inverse power of the
  distance. In this work, we treat the case of general space dimension. We
  obtain basic existence results for minimizers with sufficiently small masses.
  For certain ranges of the exponent in the kernel we also obtain non-existence
  results for sufficiently large masses, as well as a characterization of
  minimizers as balls for sufficiently small masses and low spatial
  dimensionality. The physically important special case of three space
  dimensions and Coulombic repulsion is included in all the results mentioned
  above. In particular, our work yields a negative answer to the question if
  stable atomic nuclei at arbitrarily high atomic numbers can exist in the
  framework of the classical liquid drop model of nuclear matter. In all cases
  the minimal energy scales linearly with mass for large masses, even if the
  infimum of energy may not be attained.
\end{abstract}

% \tableofcontents

\section{Introduction}

This paper is the second part of \cite{KnuepferMuratov-2011b}, in
which a non-local modification of the classical isoperimetric problem
was considered. Namely, we wish to examine minimizers of the energy
functional
\begin{align} \label{EE} %
  E(u) = \int_{\mathbb R^n} |\nabla u| \, dx %
  + \int_{\R^n} \int_{\R^n} \frac{u(x) u(y)}{|x - y|^\alpha} \ dx dy,
\end{align}
in which $u \in BV(\mathbb R^n;\{ 0, 1\})$, $n \geq 2$ and $\alpha \in
(0,n)$. We assume that the mass associated with $u$ is prescribed,
i.e.
\begin{align}
  \label{mass}
  \int_{\R^n} u \ dx \ = \ m,
\end{align}
for some $m \in (0,\infty)$. Note that the considered range of values
of $\alpha \in (0,n)$ ensures that the non-local part of the energy in
\eqref{EE} is always well-defined.

\medskip

The above problem arises in a number of physical contexts
\cite{Muratov-1998}. A case which is of particular physical importance
is the one of $n = 3$ and $\alpha = 1$, corresponding to {\em
  Coulombic repulsion} (for an overview, see
\cite{Muratov-2002}). Perhaps the earliest example where the model in
\eqref{EE} and \eqref{mass} appears is the liquid drop model of the
atomic nuclei proposed by Gamow in 1928 \cite{gamow30} and then
developed by von Weizs\"acker \cite{weizsacker35}, Bohr
\cite{bohr36,bohr39} and many other researchers later on. This model
was used to explain various properties of nuclear matter and, in
particular, the mechanism of nuclear fission (for more recent studies,
see e.g.  \cite{cohen62,cohen74,pelekasis90,myers96}).  Due to the
fundamental nature of Coulombic interaction, the same model (or its
diffuse interface analog) also arises in many other physical
situations (see
e.g. \cite{CareMarch-1975,EmeryKivelson-1993,Nagaev-1995,
  ChenKhachaturyan-1993}) and, in particular, is relevant to a variety
of polymer, as well as other systems (see e.g.
\cite{NyrkovaKhokhlovEtAl-1994,
  OhtaKawasaki-1986,Degennes-1979,Stillinger-1983,GlotzerDiEtAl-1995,
  KovalenkoNagaev-1986,Mamin-1994}).

\medskip

It is well known that the local part of the energy in \eqref{EE},
which leads to the classical isoperimetric problem, is uniquely
minimized by balls among all sets of finite perimeter with prescribed
mass \cite{Degiorgi-1958}. The key ingredient in the proof of this
celebrated result by De Giorgi is the use of Steiner symmetrization,
which lowers the interfacial energy. The effect of the rearrangement
in the Steiner symmetrization, however, is quite different for the
non-local part of the energy in \eqref{EE}. Since by the rearrangement
the mass is transported closer together, the resulting non-local
energy actually increases. It is this competition of the cohesive
forces due to surface tension and the repulsive long-range forces that
makes this variational problem highly non-trivial. In particular,
minimizers are no longer expected to be convex or even exist at all
for certain ranges of the parameters.  To take the particular case of
the nuclear drop model, we are not aware of any prior studies
establishing existence or non-existence of minimizers for large masses
which would not assume spherical symmetry of the drop.  An {\em
  ansatz-free} answer to this question, however, is essential in order
to develop a basic understanding of the properties of atoms.

\medskip

In our previous work \cite{KnuepferMuratov-2011b}, we have
investigated the two-dimensional version of the variational problem
associated with \eqref{EE}. We investigated existence, non-existence
and shape of minimizers in the full range of $\alpha \in (0,2)$ for $n
= 2$ and provided a complete characterization of the minimizers for
sufficiently small $\alpha$. The special topological structure of
$\R^2$ simplifies many of the arguments used in that work.  In the
present work, we extend these results to the general case of $n \geq
2$ space dimensions. In particular, we investigate existence, shape
and regularity of minimizers for prescribed mass $m \in
(0,\infty)$. Due to the technical difficulties associated with the
transition from the $n = 2$ case to $n \geq 3$, however, we are able
to treat in a similar way only certain ranges of $\alpha$ and $n$. In
particular, we need to work within the general framework of sets of
finite perimeter.

\medskip

As in \cite{KnuepferMuratov-2011b}, we are able to establish existence
of minimizers for sufficiently small masses for all $n \geq 3$ and the
full range of $\alpha \in (0, n)$. At the same time, we are only able
to prove that balls are the unique minimizers (up to translations) for
sufficiently small masses when $n \leq 7$ and $\alpha \in (0,
n-1)$. Note that the physically relevant special case of $n = 3$ and
$\alpha = 1$ is included. The first restriction, $n \leq 7$, seems to
be of technical nature: In fact, for our arguments we use the result
that for $n \leq 7$ quasiminimizers of the perimeter have smooth
boundaries (see, e.g., \cite{ambrosio98,tamanini84}). Perhaps one
could remove this restriction by using more sophisticated machinery of
the regularity theory for quasiminimizers, e.g., following the ideas
of the recent work by Figalli and Maggi \cite{figalli11}. The second
restriction, $\alpha \in (0,n-1)$, however is of more fundamental
nature and distinguishes the case of far field-dominated regime
$\alpha < n - 1$ from the near field-dominated regime $\alpha \geq n -
1$ (cf. also with \cite{KnuepferMuratov-2011b}). In the latter case
the potential associated with the minimizer is no longer
Lipschitz-continuous (cf. \eqref{v}, \eqref{v-est}). Hence, a
different approach is needed in this case to deal with the shape of
minimizers; this is necessary even in the regime, in which the
perimeter term dominates the non-local term.

\medskip

Similarly, we are only able to prove non-existence of minimizers for
large masses in the case $\alpha < 2$. Observe that, once again, our
result covers the physical most relevant case of Coulombic
interaction, i.e., $n = 3$ and $\alpha = 1$. We note that for the
latter case a non-existence proof was also very recently obtained by
Lu and Otto in their study of the Thomas-Fermi-Dirac-von Weizs\"acker
model of quantum electron gas, using different arguments
\cite{lu12}. From the point of view of applications, our result
provides a basic non-existence result for uniformly charged drops with
sufficiently large masses minimizing the energy in \eqref{EE} and, in
particular, for ground states of atomic nuclei with sufficiently large
atomic numbers within the charged drop model of nuclear matter. Our
analysis also partially substantiates the picture described in
\cite{ChoksiPeletier-2010,ChoksiPeletier-2011} for the Coulombic case
in three space dimensions. The non-existence proof fails in the
opposite case of $\alpha \geq 2$, and we do not know whether the
result still holds for some $\alpha \in [2, n)$, when the potential
has shorter range. What we did show is that independently of $\alpha$
the minimal energy always scales linearly with mass for large
masses. Note that this result is consistent both with a minimizing
sequence consisting of many isolated balls moving away from each other
and with a minimizer in the form of a long ``sausage-shaped''
drop. Which of these two alternatives occurs for the large mass case
and $\alpha \geq 2$ remains to be studied.

\medskip

Our paper is organized as follows. In Sec. \ref{s-not}, we introduce
the basic notions of the geometric measure theory. Here we also
reformulate our variational problem in the framework of sets of finite
perimeter, provide some technical results that will be used in the
analysis and describe all the notations.  In Sec. \ref{s-main}, we
state the main results of the paper and outline the key ideas of their
proofs. In Sec. \ref{s-bas}, we prove several technical lemmas that
are used throughout the rest of the paper. In Sec. \ref{s-exist}, we
establish existence of minimizers (Theorem \ref{thm-existence}) for
small masses. In Sec. \ref{s-ball}, we establish the precise shape of
the minimizers for small masses in a certain range of the parameters
(Theorem \ref{thm-ball}). Finally, in Sec. \ref{s-non} we establish
non-existence of minimizers for large masses in a certain range of the
parameters (Theorem \ref{thm-nonexistence}) and the scaling and
equipartition of energy for large masses in the whole range of the
parameters (Theorem \ref{thm-scaling}).

\section{Notation and sets of finite perimeter}
\label{s-not}

The variational problem \eqref{EE}-\eqref{mass} is most conveniently
addressed in the setting of geometric measure theory. In this section,
we first introduce some basic measure theoretic notions; here we refer
to \cite{AmbrosioFuscoEtal-Book,AmbrosioCasellesEtal-2001} as
references. We then reformulate \eqref{EE} in terms of sets of finite
perimeter. We conclude the section by recalling some basic regularity
results for minimizers.

\paragraph{Some measure theoretic notions:} 
We say that a function $u \in L^1(\R^n)$ has bounded variation, $u \in
BV(\R^n)$, if
\begin{align}
  \int_{\R^n} |\nabla u| \, dx := \sup_{\| \zeta \|_\infty \leq 1}
  \left\{ \int_{\R^n} u \nabla \cdot \zeta \, dx \ : \ \zeta \in
    C^1_c(\R^n; \R^n) \right\} < \infty.
\end{align}
For any measurable set $F \subset \R^n$, we denote by $|F|$ its
$n$-dimensional Lebesgue measure.  Moreover, $F$ is said to have
finite parameter if $\chi_F \in BV(\R^n)$, where $\chi_F$ is the
characteristic function of $F$; its perimeter is then defined by $P(F)
:= \int_{\R^n} |\nabla \chi_F| dx$. The $k$-dimensional Hausdorff
measure with $k \in [0,n]$ is denoted by $\HH^k(F)$. We will
frequently use Fubini's theorem for measures \cite[Theorem
1.74]{AmbrosioFuscoEtal-Book}, as well as the co-area formula for
integration in spherical coordinates \cite[Proposition 1 of
Sec. 3.4.4]{EvansGariepy-Book}.

\medskip

For any Lebesgue measurable set $F$, its upper density at a point $x
\in \R^n$ is
\begin{align}
  \overline D(F,x) \ := \ \limsup_{r \to 0} \frac{|F \cap
    B_r(x)|}{|B_r(x)|},
\end{align} 
where $B_r(x)$ is the open ball with center $x$ and radius $r$. The
{\em essential interior} $\mathring{F}^M$ of $F$ is then defined as
the set of all $x \in \mathbb R^n$ for which $\overline{D}(F, x) =1$,
while the {\em essential closure} $\overline{F}^M$ of $F$ is defined
as the set of all $x \in \mathbb R^n$ for which $\overline{D}(F, x) >
0$.  The {\em essential boundary} $\partial^M F$ of $F$ is defined as
the set of all points where $\overline D(F,x) > 0$ and $\overline
D(\R^n \backslash F,x) > 0$. By a result of Federer, a set has finite
perimeter if and only if $\HH^{n-1}(\partial^M F) < \infty$. The {\em
  reduced boundary} $\partial^* F$ of a set of finite perimeter $F$ is
defined as a set of all points $x \in
\partial^M F$ such that the measure-theoretic normal exists at $x$,
i.e., if the following limit exists:
\begin{align}
  \label{eq:FE}
  \nu_F(x) \ := \ \lim_{r \to 0} {\int_{B_r(x)} \nabla \chi_F(y) dy \over
    \int_{B_r(x)} |\nabla \chi_F(y)| dy} \quad \text{and} \quad
  |\nu_F(x)| = 1,
\end{align}
where $\nabla \chi_F$ denotes the vector-valued Radon measure associated with
the distributional derivative of $\chi_F$ and $|\nabla \chi_F|$ coincides with
the $\mathcal H^{n-1}$ measure restricted to $\partial^M F$.  Again, by a result
of Federer we have $\mathcal H^{n-1}(\partial^M F \backslash \partial^* F) = 0$
\cite[Theorem 3.61]{AmbrosioFuscoEtal-Book}.

\medskip

Note that the topological notion of connectedness is not well-defined
for sets of finite perimeter, since these sets are only defined up to
$\mathcal H^n$-negligible sets. However, the following generalization
of the notion of connectedness can be defined: We say that a set $F$
of finite perimeter is {\it decomposable} if there exists a partition
$(A,B)$ of $F$ such that $P(F) = P(A) + P(B)$ for two sets $A,B$ with
positive Lebesgue measure. Otherwise the set is called {\em
  indecomposable}, which is the measure theoretic equivalent of the
notion of a connected set. Similarly, we say that a measurable set $F$
is {\it essentially bounded} if its essential closure $\overline{F}^M$
is bounded.

\paragraph{Notations for the isoperimetric problem:} The isoperimetric
deficit of a set of finite perimeter $F \subset \R^n$ is defined in
this paper by
\begin{align}
  \label{isodef}
  D(F) \ := \ \frac{P(F)}{n \omega_n^{1 \over n} |F|^{n-1 \over n }} -
  1,
\end{align}
where $\omega_n = \frac{\pi^{n/2}}{\Gamma(\frac n2 + 1)}$ denotes the measure of
the unit ball in $\R^n$. 

\medskip

A natural notion of the difference of two measurable sets $F$ and $G$
with $|F| = |G|$ is the Fraenkel asymmetry:
\begin{align}
  \label{Fasym}
  \Delta(F,G) \ := \ \min_{x \in \R^n} { |F \triangle (G + x)| \over
    |F|},
\end{align}
where $F \triangle G := (F \backslash G) \cup (G \backslash F)$
denotes the symmetric difference of the sets $F$ and $G$. The
following quantitative version of the isoperimetric inequality
relating the Fraenkel asymmetry \eqref{Fasym} and the isoperimetric
deficit \eqref{isodef} was recently established
\cite{FuscoMaggiPratelli-2008}:
\begin{align}
  \label{qiso}
  \Delta (F, B) \ \leq \ C_n \sqrt{D(F)},
\end{align}
where $B$ is a ball with $|B| = |F|$ and $C_n$ is a positive constant
that depends only on the dimension $n$.

\medskip

Another important notion is the notion of {\it quasiminizer of the
  perimeter} (see, e.g., \cite{ambrosio98,tamanini84}).  A set $F$ of
finite perimeter is called a quasiminimizer of the perimeter (with
prescribed mass), if there exist a constant $C > 0$ such that for all
$G \subset \R^n$ with $|G| = |F|$ and $F \triangle G \subset B_r(0)$
for some and $r > 0$, one has
\begin{align}
  \label{qm}
  P(F) \ \leq \ P(G) + C |F \triangle G|.
\end{align}
As will be shown below (see Proposition \ref{thm-regularity}),
minimizers of our variational problem are quasiminimizers of the
perimeter in the above sense and, therefore, the regularity results of
\cite{Rigot-2000,xia05} apply to them.

\paragraph{The variational model:} We express \eqref{EE} as a
functional on sets of finite perimeter. For any measurable set $F
\subset \R^n$, let the potential $v_F$ be given by
\begin{align}
  \label{v}
  v_F (x) \ := \ \int_F \frac 1{|x-y|^\alp} \ dx.
\end{align}
The nonlocal part of the energy in \eqref{EE} can then be expressed as
\begin{align}
  \NL(F) \ := \ \int_F v_F \ dx \ = \ \int_{F} \int_{F} \frac 1{|x -
    y|^\alpha} \ dx dy.
\end{align}
The energy \eqref{EE} can hence be expressed as
\begin{align} \label{E} %
  E(F) \ = \ P(F) + \NL(F).
\end{align}
We say that $\Omega$ is a minimizer of \eqref{E} if $E(\Omega) \leq
E(F)$ for all sets of finite parameter $F$ with $|F| = |\Ome|$. In the
following, we will reserve the symbol $\Omega$ to denote
minimizers. 
% We also say that $\Omega$ is a local minimizer of \eqref{E} if there
% exists $\delta > 0$ such that $E(\Omega) \leq E(F)$ for all sets of
% finite parameter $F$ with $|F| = |\Ome|$ and $|F \triangle \Omega|
% \leq \delta$ and $F \triangle \Omega \subset B_r(0)$ for some $r >
% 0$.
It can be shown that minimizers of the energy are solutions of an
Euler-Lagrange equation in a suitable sense. In this paper, however,
we will not use the Euler-Lagrange equation of this variational
problem, but instead we will use only the energy to obtain the
necessary estimates. In this sense, the methods used in this work are
more general than the approach in our related work for the
two-dimensional case where we used the Euler-Lagrange equation
\cite{KnuepferMuratov-2011b}.

\medskip

We have the following general result concerning the regularity of
minimizers of the considered variational problem:

\begin{proposition} \label{thm-regularity} %
  Let $\Omega$ be a minimizer for \eqref{E}. Then the reduced boundary
  $\partial^* \Omega$ of $\Omega$ is a $C^{1,\frac12}$ manifold.
  Furthermore, $\HH^k(\partial^M \Omega \backslash \partial^* \Omega)
  = 0$ for all $k > n - 8$. In particular, for $n \leq 7$ the set
  $\Omega$ is (up to a negligible set) open with boundary of class
  $C^{1,\frac12}$. The complement of $\Omega$ has finitely many
  connected components.
\end{proposition}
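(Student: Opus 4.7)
The plan is to reduce the regularity statements to the theory of quasi-minimizers of the perimeter cited just before the proposition, and to handle the statement about connected components of $\Omega^c$ separately via a fill-in argument. Once \eqref{qm} is established for $\Omega$, the $C^{1,1/2}$-regularity of $\partial^*\Omega$, the bound $\HH^k(\partial^M \Omega \setminus \partial^* \Omega) = 0$ for $k > n-8$, and the openness for $n \leq 7$ follow directly from \cite{tamanini84,Rigot-2000,xia05} together with the standard De Giorgi--Federer dimension-reduction argument.

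\textbf{Quasi-minimality via an $L^\infty$ bound on the potential.} For any competitor $G$ with $|G| = |\Omega|$ and $\Omega \triangle G \subset B_r(0)$, minimality gives $P(\Omega) \leq P(G) + (\NL(G) - \NL(\Omega))$. Decomposing $\NL(G) - \NL(\Omega)$ symmetrically in $\chi_G$ and $\chi_\Omega$, one obtains $|\NL(G) - \NL(\Omega)| \leq (\|v_G\|_\infty + \|v_\Omega\|_\infty)\,|\Omega \triangle G|$. Since $|F| = m < \infty$ and $\alpha < n$, Riesz rearrangement bounds $\|v_F\|_\infty$ by the potential of the ball of volume $m$ at the origin, which is a finite constant $C(m, n, \alpha)$. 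This proves \eqref{qm} with constant $2C(m,n,\alpha)$, and the first three conclusions follow from the cited regularity theory.

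\textbf{Finitely many components of $\Omega^c$.} Density lower bounds at points of $\partial^M \Omega$ (which hold for quasi-minimizers) together with the isoperimetric inequality imply $P(H) \geq c_n |H|^{(n-1)/n}$ for every bounded component $H$ of $\Omega^c$, while $\sum_i P(H_i) \leq P(\Omega) < \infty$. It therefore suffices to show $|H| \geq \varepsilon_0(\Omega) > 0$. Assume for contradiction that $|H|$ is small, and consider the mass-preserving competitor $\tilde \Omega := \lambda(\Omega \cup H)$ with $\lambda = (m/(m + |H|))^{1/n} < 1$. Filling in $H$ removes exactly $P(H)$ from the perimeter (since $\partial H \subset \partial \Omega$), while the rescaling perturbs perimeter and nonlocal term each by $O(|H|)$ (using the $L^\infty$ bound on $v_\Omega$ once more for $\NL$). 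Thus $E(\tilde \Omega) - E(\Omega) = -P(H) + O(|H|) \leq -c\,|H|^{(n-1)/n} + O(|H|)$, which is strictly negative for $|H|$ small enough since $(n-1)/n < 1$. This contradicts minimality and yields the required uniform lower bound on $|H|$.

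\textbf{Main obstacle.} The most technical step is this last one: one must (i) verify that $\Omega$ has only a single \emph{unbounded} complementary component, which I expect to derive from essential boundedness of $\Omega$ (a consequence of the density estimates and finite mass, perhaps via a covering argument and likely already available in the existence theory of Section \ref{s-exist}); and (ii) keep track of signs and orders of magnitude in the $|H|$-expansion carefully enough to see that $-P(H)$ dominates the rescaling corrections. By contrast, once \eqref{qm} is in hand, steps 1--3 are automatic from the cited literature.
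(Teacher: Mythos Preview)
Your core argument is the paper's: establish \eqref{qm} via an $L^\infty$ bound on the potential and then invoke \cite{Rigot-2000,xia05}. The paper bounds $v_F$ by splitting the integral over $B_1(x)$ and its complement, giving $\|v_F\|_\infty \leq \int_{B_1(0)}|y|^{-\alpha}\,dy + m$, rather than via Riesz rearrangement, but the content is identical. The one substantive difference is that the paper also attributes the statement about finitely many complementary components to \cite[Theorem~4.5]{xia05}, whereas you supply your own fill-in/rescaling argument. Your argument is correct and has the virtue of being self-contained; the paper simply chooses to cite.

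On your ``main obstacle'': the uniqueness of the infinite-measure component of $\Omega^c$ does not require essential boundedness of $\Omega$ (which in the paper is proved \emph{after} this proposition, in Lemma~\ref{lem-indec}, so relying on it would indeed risk circularity). It follows directly from $P(\Omega)<\infty$: for any set $H$ of finite perimeter the relative isoperimetric inequality in $B_R$ gives $\min(|H\cap B_R|,|B_R\setminus H|)^{(n-1)/n}\leq C_n\,P(H;B_R)\leq C_n\,P(H)$, and sending $R\to\infty$ yields $\min(|H|,|H^c|)<\infty$. Hence at most one indecomposable component of $\Omega^c$ has infinite measure, and your competitor $\tilde\Omega=\lambda(\Omega\cup H)$ only needs $|H|<\infty$, not boundedness of $H$. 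So there is no real obstacle here.
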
 
\begin{proof}
  The proof is an adaptation of the results of \cite{Rigot-2000} and
  \cite{xia05}.  In \cite{Rigot-2000}, Rigot established the regularity of a
  class of quasiminimizers of the perimeter with prescribed mass, which includes
  our notion \eqref{qm} of quasiminimizers. It is hence enough to show that
  every minimizer $\Ome$ of \eqref{E} is also a quasiminimizer in the sense of
  definition \eqref{qm}. The statement of the Theorem then follows from
  \cite[Theorem 1.4.9]{Rigot-2000} and \cite[Theorem 4.5]{xia05}. Let $\Omega$
  be a minimizer of \eqref{E} with prescribed mass and let $F$ be a set of
  finite perimeter with $|F| = |\Omega|$ and $F \triangle \Omega \subset B_r(0)$
  for some $r > 0$. By the minimizing property of $\Ome$, we have
%  \begin{align}
%      \label{Equasimin}
%      P(\Omega) + V(\Omega) \leq P(F) + V(F).
%    \end{align}
%    In turn, this implies
  \begin{align}
    \label{Equasimin2}
    P(\Omega) - P(F) \ %
    & \leq V(F) - V(\Omega) \ %
    \lupref{v}\leq \ \int_{\Omega
      \triangle F} ( v_\Omega + v_F ) \, dx \notag \\
    &\leq 2 \, |\Omega \triangle F| \bigg( \int_{B_1(0)} {1 \over
      |y|^\alpha} \, dy + m \bigg) \leq C \, |\Omega \triangle F|,
  \end{align}
  for some $C > 0$ depending only on $n$, $\alpha$ and $m$. It follows
  that the minimizers of \eqref{E} are also quasiminimizers of the
  perimeter.
\end{proof}

\paragraph{Other notations:}

Unless otherwise noted, all constants throughout the proofs are
assumed to depend only on $n$ and $\alpha$. The symbol $e_k$ is
reserved for the unit vector in the $k$-th coordinate direction.

\section{Main results}
\label{s-main}

In our first result we show that for sufficiently small masses there exists a
minimizer of the considered variational problem.

\begin{theorem}[Existence of minimizers] \label{thm-existence} %
  For all $n \geq 3$ and for all $\alp \in (0,n)$ there is a mass $m_1
  = m_1(\alpha, n) > 0$ such that for all $m \leq m_1$, the energy in
  \eqref{E} has a minimizer $\Omega \subset \R^n$ with $|\Omega| =
  m$. The minimizer $\Omega$ is essentially bounded and
  indecomposable.
\end{theorem}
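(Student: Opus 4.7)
My plan is to prove existence via the direct method combined with a concentration-compactness argument tailored to small masses. For small $m$ the functional $E$ is a small perturbation of the pure perimeter problem, so the classical isoperimetric inequality forces a minimizing sequence to concentrate in a single lump. The delicate point is that $V$ is not lower semicontinuous under mass escape, so dichotomy must be ruled out by hand.

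I first test \eqref{E} with a ball $B$ of volume $m$ to obtain
\begin{equation}
 e(m) := \inf_{|F|=m} E(F) \leq n\omega_n^{1/n}\, m^{(n-1)/n} + c_{n,\alpha}\, m^{(2n-\alpha)/n},
\end{equation}
so for $m \leq 1$ the infimum is of order $m^{(n-1)/n}$, with the nonlocal contribution entering only as a higher-order correction. A minimizing sequence $\{F_k\}$ then has uniformly bounded perimeter. A concentration lemma (decompose into indecomposable components, pick the largest, bound its essential diameter via the isoperimetric inequality and the perimeter bound) yields translations $\tau_k \in \R^n$ such that a fixed ball $B_R$ captures a uniformly positive fraction of the translated set. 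BV-compactness then gives a subsequence with $\chi_{F_k+\tau_k} \to \chi_\Omega$ in $L^1_{\text{loc}}$ for some $\Omega$ with $|\Omega| \leq m$.

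The crux is showing $|\Omega|=m$. If a fraction $m' \in (0,m)$ were lost to infinity, one splits $F_k = A_k \cup B_k$ with $\mathrm{dist}(A_k, B_k) \to \infty$, $|A_k| \to m-m'$, $|B_k| \to m'$; the cross nonlocal term vanishes in the limit, producing $e(m) \geq e(m-m') + e(m')$. Combining the ball upper bound with the isoperimetric lower bound $e(s) \geq n\omega_n^{1/n} s^{(n-1)/n}$ yields
\begin{equation}
 e(m-m')+e(m')-e(m) \geq n\omega_n^{1/n}\bigl[(m-m')^{(n-1)/n} + m'^{(n-1)/n} - m^{(n-1)/n}\bigr] - c_{n,\alpha}\, m^{(2n-\alpha)/n},
\end{equation}
which is strictly positive uniformly for $m' \in [\delta m,(1-\delta)m]$ once $m$ is small enough, by strict concavity of $t \mapsto t^{(n-1)/n}$. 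The degenerate subregime $m' \downarrow 0$, where the concavity gain itself degenerates, is handled separately: a small escaping blob of volume $m'$ contributes perimeter at least $n\omega_n^{1/n} m'^{(n-1)/n}$, while reabsorbing its volume into the main piece costs only $O(m'/R)$ in perimeter and $O(m' m)$ in nonlocal energy, both strictly smaller than $m'^{(n-1)/n}$ for $m$ small. This uses a perimeter-transfer lemma in the spirit of those prepared in Section \ref{s-bas}.

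With mass loss ruled out, lower semicontinuity of $P$ and $L^1$-continuity of $V$ (the kernel is locally integrable and the mass is tight) give $E(\Omega) \leq e(m)$, so $\Omega$ is a minimizer. Indecomposability is an a posteriori argument: a decomposition $\Omega = A \sqcup B$ with $|A|,|B|>0$ would allow translating $B$ far from $A$ while preserving the total perimeter and strictly decreasing the nonlocal cross term, contradicting minimality. Essential boundedness then follows from indecomposability combined with the uniform density lower bounds at every point of $\partial^M \Omega$ implied by the quasiminimality of Proposition \ref{thm-regularity}. The main obstacle throughout is the dichotomy analysis in the very unequal regime, where the classical concavity gain is small, of order $m'^{(n-1)/n}$, and must be balanced against the nonlocal perturbation using an independent quantitative estimate.
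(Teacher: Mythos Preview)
Your concentration-compactness strategy is viable and the a posteriori arguments for indecomposability and essential boundedness are correct, but one step in the compactness phase fails as written: you cannot bound the essential diameter of the largest indecomposable component from the perimeter when $n \geq 3$. A ball of volume comparable to $m$ joined by a thin tube of length $L$ and cross-sectional radius $\sigma$ has perimeter of order $m^{(n-1)/n} + L\sigma^{n-2}$; sending $\sigma \to 0$ with $L\sigma^{n-2}$ fixed drives the diameter $L \to \infty$ while both perimeter and volume stay controlled, so the isoperimetric inequality gives no diameter bound. The correct concentration step is the standard one: tile $\R^n$ by unit cubes $Q_j$, apply the relative isoperimetric inequality in each (valid since $|F_k \cap Q_j| \leq m \leq \tfrac12 |Q_j|$), and sum to obtain $P(F_k) \geq c \sum_j |F_k \cap Q_j|^{(n-1)/n} \geq c\, \eta^{-1/n} m$ with $\eta := \sup_j |F_k \cap Q_j|$, whence $\eta \geq (cm/P(F_k))^n$ is uniformly positive along the sequence. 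With this repair your outline goes through; the degenerate dichotomy regime $m' \to 0$ does require exactly the rescaling comparison you allude to, and that is precisely the content of Lemma~\ref{lem-split}.

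The paper takes a genuinely different and more direct route that sidesteps concentration-compactness altogether. Lemma~\ref{lem-bounded} shows that for small $m$ \emph{every} competitor $F$ can be replaced by some $G \subset B_1(0)$ with $E(G) \leq E(F)$: if $E(F) > E(B_r)$ one simply takes $G = B_r$; otherwise the quantitative isoperimetric inequality~\eqref{qiso} forces $|F \setminus B_r| \leq C r^{n+1/2}$ after a translation, and then either the non-optimality criterion Lemma~\ref{lem-split} applies at some radius $\rho > r$ (producing a comparison set already in $B_1$), or a differential inequality for $U(\rho) := |F \setminus B_\rho|$ forces $U(1) = 0$. Once all competitors are confined to a fixed ball, the direct method is immediate by lower semicontinuity of $P$ and continuity of $V$. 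This trades the soft dichotomy machinery for a hard pointwise localization lemma; it is shorter and avoids the perimeter-splitting technicalities of Lions' framework, at the price of invoking the sharp quantitative isoperimetric inequality already at the existence stage rather than only later for the shape analysis.
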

The proof of this theorem follows by the direct method of calculus of
variations, once we show that for sufficiently small mass every
minimizing sequence of the energy may be replaced by another
minimizing sequence where all sets have uniformly bounded essential
diameter.  By the regularity result in Proposition
\ref{thm-regularity}, we also obtain certain regularity of the
minimizer's boundary. We note that our result improves an existence
result of \cite{ChoksiPeletier-2010,ChoksiPeletier-2011} for the
Coulombic case $n = 3$ and $\alpha = 1$, demonstrating that there
exists an {\em interval} of masses near the origin for which the
minimizers indeed exist.

\medskip

By analogy with the two-dimensional case, it is natural to expect that
if the mass is sufficiently small, the minimizer of the considered
variational problem is precisely a ball \cite{KnuepferMuratov-2011b}.
Our next result shows that this is indeed the case at least in a
certain range of values for $\alpha$.
\begin{theorem}[Ball is the minimizer] \label{thm-ball} %
  For all $3 \leq n \leq 7$ and for all $\alp \in (0,n-1)$ there is a
  mass $m_0 = m_0(\alpha, n) > 0$ such that for all $m \leq m_0$, the
  unique (up to translation) minimizer $\Omega \subset \R^n$ of
  \eqref{EE} with $|\Omega| = m$ is given by a ball.
\end{theorem}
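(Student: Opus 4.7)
My approach is a quantitative Fuglede-type second-variation argument. Rescaling $\tilde \Ome := m^{-1/n} \Ome$ to unit mass, the minimization of $E$ under $|\Ome| = m$ becomes the minimization of $\tilde E(\tilde F) := P(\tilde F) + \eps \NL(\tilde F)$ under $|\tilde F| = 1$, where $\eps := m^{(n+1-\alp)/n} \to 0$ as $m \to 0$. The plan is to show that for $\eps$ sufficiently small, the only minimizer of $\tilde E$ is the unit ball $B_1$.

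First, I use Theorem \ref{thm-existence} to produce a minimizer $\tilde\Ome$ and establish its $C^{1,\beta}$ closeness to a ball. By the Riesz rearrangement inequality, $\NL(\tilde\Ome) \leq \NL(B_1)$, and the minimality of $\tilde\Ome$ forces $P(\tilde\Ome) - P(B_1) \leq \eps\NL(B_1) = O(\eps)$. The isoperimetric deficit $D(\tilde\Ome)$ is thus $O(\eps)$, and the quantitative isoperimetric inequality \eqref{qiso} yields $\Delta(\tilde\Ome, B_1) \leq C \sqrt{\eps}$. Since $n \leq 7$, $\tilde\Ome$ is a quasiminimizer of the perimeter with $C^{1,1/2}$ boundary (Proposition \ref{thm-regularity}), and a standard improved convergence result upgrades this $L^1$ closeness to $C^{1,\beta}$ closeness: after translation, I can parametrize $\partial\tilde\Ome = \{y + u(y)\nu(y) : y \in \partial B_1\}$ with $\|u\|_{C^{1,\beta}(\partial B_1)} \to 0$.

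The next step is a Taylor expansion of both $P$ and $\NL$ in $u$. Translating $\tilde\Ome$ to optimize the Fraenkel asymmetry and using the volume constraint $|\tilde\Ome| = |B_1|$, I may assume $u$ is, to leading order, orthogonal to the kernel of Fuglede's second variation (the constants and the coordinate functions on $\partial B_1$). The classical Fuglede inequality then yields
\begin{align*}
P(\tilde\Ome) - P(B_1) \geq c_n \|u\|_{H^1(\partial B_1)}^2 + o(\|u\|_{C^{1,\beta}}^2).
\end{align*}
For the nonlocal term, I use the identity $\NL(\tilde\Ome) - \NL(B_1) = \int (v_{\tilde\Ome} + v_{B_1})(\chi_{\tilde\Ome} - \chi_{B_1})\,dx$ and parametrize the symmetric difference in normal coordinates over $\partial B_1$. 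The linear-in-$u$ term vanishes since $v_{B_1}|_{\partial B_1}$ is constant by symmetry and $\int u\,d\HH^{n-1} = 0$ to leading order; the quadratic-in-$u$ contribution is then estimated by $C \|u\|_{L^2(\partial B_1)}^2 + o(\|u\|^2)$.

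Adding the expansions, $\tilde E(\tilde\Ome) - \tilde E(B_1) \geq c_n \|u\|_{H^1}^2 - C\eps\|u\|_{L^2}^2 + o(\|u\|^2) \geq (c_n - C\eps)\|u\|_{H^1}^2 + o(\|u\|^2)$, which is strictly positive unless $u \equiv 0$ for $\eps$ small, forcing $\tilde\Ome = B_1$. The main obstacle is the quantitative control of the nonlocal remainder by $\|u\|_{L^2}^2$, which requires the Lipschitz regularity of $v_{B_1}$; this follows from the bound $|\nabla v_F(x)| \leq C \int_F |x-y|^{-\alp-1}\,dy$ being finite, which is precisely where the restriction $\alp < n - 1$ enters. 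The restriction $n \leq 7$ is needed for the $C^{1,\beta}$ regularity of $\partial\tilde\Ome$ via Proposition \ref{thm-regularity}, without which the rigorous Taylor expansion in $u$ is not available.
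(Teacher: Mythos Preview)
Your proposal is correct and follows essentially the same strategy as the paper's proof in Section~\ref{s-ball}: rescale to unit mass, use the quantitative isoperimetric inequality together with uniform quasiminimizer regularity (for $n\leq 7$) to obtain $C^1$-closeness to a ball, apply Fuglede's estimate to lower-bound the perimeter deficit by $c\|u\|_{H^1}^2$, and bound the nonlocal deficit by $C\|u\|_{L^2}^2$ via the Lipschitz regularity of the potential (which is exactly where $\alpha<n-1$ enters). The only minor difference is that the paper handles the nonlocal term through the positive-definiteness of the Riesz kernel (Lemma~\ref{l:posdef}), reducing directly to $V(B_1)-V(\Omega)\leq 2\int_{\Omega\triangle B_1}|v_{B_1}-v_0|\,dx$ and thereby bypassing any separate estimate of the $v_{\tilde\Omega}$ contribution in your identity.
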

For the proof of Theorem \ref{thm-ball}, which applies to the regime
where the perimeter is the dominant term in the energy, we make use
both of the quantitative isoperimetric inequality in \eqref{qiso} and
regularity estimates for the minimizer as given by Rigot
\cite{Rigot-2000}. We first show that the minimizer is close to a ball
in the $C^1$-sense (cf. with \cite{Fuglede-1989}). By the Lipschitz
continuity of the nonlocal potential, we then deduce that the minimum
energy can in fact be only achieved by precisely a ball. Let us note
that the assumption $n \leq 7$ in Theorem \ref{thm-ball} seems to be
of only technical nature and it may be possible to adapt recent
results of \cite{figalli11} to extend the statement of Theorem
\ref{thm-ball} to all $n \geq 8$ and $\alpha < n - 1$. However, our
method of proof for the $\alpha < n - 1$ case does not extend
straightforwardly to the case of $\alpha \geq n - 1$. Indeed, the
technical difficulties encountered in the latter case become rather
substantial even for $n = 2$ \cite{KnuepferMuratov-2011b}. Let us
mention that some related recent results for the $n$-dimensional
Coulombic case on bounded domains were obtained in
\cite{CicaleseSpadaro-2011}.

\medskip

On the contrary, for large masses the repulsive interaction dominates
and the variational problem does not admit a minimizer:
\begin{theorem}[Non-existence of minimizers]
  \label{thm-nonexistence} %
  For all $n \geq 3$ and for all $\alp \in (0,2)$, there is $m_2 =
  m_2(\alpha, n)$ such that for all $m \geq m_2$, the energy in
  \eqref{EE} does not admit a minimizer $\Omega \subset \R^n$ with
  $|\Omega| = m$.
\end{theorem}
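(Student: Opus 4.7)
}

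I would argue by contradiction. Assume $\Omega$ is a minimizer with $|\Omega|=m$. First I would establish the linear upper bound $e(m)\leq Cm$ for $m$ large by taking as competitor $\lceil m\rceil$ unit-mass balls placed at mutual distances so large that the pairwise $|x-y|^{-\alpha}$ cross-interactions sum to $o(m)$. This forces $P(\Omega)\leq Cm$ and $V(\Omega)\leq Cm$. By standard translation/separation competitors (moving an essentially disconnected or essentially unbounded piece of $\Omega$ to infinity strictly decreases the nonlocal energy without increasing the perimeter), $\Omega$ must in addition be essentially bounded and indecomposable.

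\medskip

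Next, for each $\nu\in S^{n-1}$ and $t\in\R$ I would consider the competitor obtained by cutting $\Omega$ along $\{\nu\cdot x=t\}$ into $\Omega_1,\Omega_2$ and translating $\Omega_2$ to infinity in the direction $\nu$. Its perimeter does not exceed $P(\Omega)+2s_\nu(t)$, with $s_\nu(t):=\mathcal H^{n-1}(\Omega\cap\{\nu\cdot x=t\})$, and its nonlocal energy tends to $V(\Omega)-2I_\nu(t)$, with $I_\nu(t):=\int_{\Omega_1}\int_{\Omega_2}|x-y|^{-\alpha}\,dx\,dy$. Minimality then forces the pointwise inequality $s_\nu(t)\geq I_\nu(t)$. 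Integrating this first in $t$ (via the identity $\int_\R I_\nu(t)\,dt=\tfrac12\int_\Omega\int_\Omega|\nu\cdot(y-x)|\,|x-y|^{-\alpha}\,dx\,dy$ together with $\int_\R s_\nu(t)\,dt=m$) and then in $\nu\in S^{n-1}$ (using $\int_{S^{n-1}}|\nu\cdot z|\,d\nu=c_n|z|$) gives the crucial a priori bound
\[
V_{\alpha-1}(\Omega)\ :=\ \int_\Omega\int_\Omega\frac{dx\,dy}{|x-y|^{\alpha-1}}\ \leq\ C\,m.
\]

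\medskip

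To close the argument I would prove a matching super-linear lower bound on $V_{\alpha-1}(\Omega)$. When $\alpha\in(0,1]$, the kernel $|x-y|^{1-\alpha}$ is non-decreasing in $|x-y|$; a Riesz-type comparison with a ball of mass $m$ centered at $x$ gives $\int_\Omega|x-y|^{1-\alpha}\,dy\geq c\,m^{(n-\alpha+1)/n}$ for every $x\in\Omega$, whence $V_{\alpha-1}(\Omega)\geq c\,m^{(2n-\alpha+1)/n}$, forcing $m\leq C$. When $\alpha\in(1,2)$ the kernel is decreasing and this comparison reverses direction; instead I would use $V_{\alpha-1}(\Omega)\geq m^2/D^{\alpha-1}$, where $D$ is the essential diameter of $\Omega$, together with an upper bound of the form $D\leq Cm$. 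The latter would follow from the uniform density estimate for quasi-minimizers of the perimeter (Proposition~\ref{thm-regularity}): a covering of any chord in $\Omega$ realizing its diameter by disjoint balls of a fixed radius $r_0$ inside which $|\Omega\cap B_{r_0}|\gtrsim r_0^n$ gives $D\cdot r_0^{n-1}\lesssim m$. This yields $V_{\alpha-1}(\Omega)\geq c\,m^{3-\alpha}$ with $3-\alpha>1$, contradicting the upper bound once $m\geq m_2(\alpha,n)$.

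\medskip

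The main obstacle is securing the diameter bound $D\leq Cm$ in the range $\alpha\in(1,2)$: the quasi-minimality constant arising in \eqref{Equasimin2} scales like the $L^\infty$-norm of $v_\Omega$, and hence grows with $m$, so a naive density estimate only yields $r_0\gtrsim 1/m$ and thus $D\leq Cm^n$---sharp enough only for $\alpha<1+1/n$. To cover the entire range $\alpha<2$ one must exploit the fact that quasi-minimality at scale $r$ is controlled by the local oscillation of $v_\Omega$ on balls of radius $r$, not by its global $L^\infty$ norm; this is the technical heart of the proof.
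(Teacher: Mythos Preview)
Your overall strategy is sound and takes a genuinely different route from the paper's. Both start from the same slicing competitor---cut $\Omega$ by $\{\nu\cdot x=t\}$ and send the two pieces apart---yielding $s_\nu(t)\geq I_\nu(t)$. You then integrate in $t$ and average over $\nu\in S^{n-1}$ to obtain the global bound $V_{\alpha-1}(\Omega)\leq Cm$, which you confront with a lower bound on $V_{\alpha-1}$. The paper instead keeps $\nu$ fixed, estimates $I_\nu(t)\gtrsim (m/d^\alpha)\,U(t)$ with $U(t)=|\Omega\cap\{0<\nu\cdot x<t\}|$ and $d=\diam\overline{\Omega}^M$, and derives the differential inequality $U'(t)\gtrsim (m/d^\alpha)\,U(t)$ on an interval of length $\sim d$; the resulting exponential growth of $U$, seeded by $U(d/4)\geq c>0$ from the density estimate, contradicts $U\leq m$ once $m\,d^{1-\alpha}\geq c\,m^{2-\alpha}$ is large. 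Your averaged inequality is close in spirit to the Lu--Otto argument the paper cites and is arguably cleaner; the paper's Gr\"onwall approach avoids the spherical averaging but needs the pointwise density input to start the iteration.

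You also pinpoint the right technical obstacle---both arguments hinge on the uniform bound $D\leq Cm$---but your proposed fix via local oscillation of $v_\Omega$ is not what the paper does and, as stated, does not escape the $m$-dependence: in the range of interest one has $\alpha+1<n$, so the oscillation of $v_\Omega$ over $B_r(x)$ carries a far-field contribution of order $r\int_\Omega|z-x|^{-\alpha-1}\,dz\sim r(1+m)$, no better than the $L^\infty$ bound at scale $r\sim 1$. The paper's mechanism is a direct competitor argument (Lemma~\ref{lem-split}): with $F_1=\Omega\setminus B_r(x)$ and $F_2=\Omega\cap B_r(x)$, one compares $\Omega$ both with the rescaled remainder $\tilde F=\ell F_1$ (where $\ell^n=m/|F_1|$) and with a configuration $\hat F$ of well-separated unit balls, and deduces that minimality forces $P(F_1)+P(F_2)-P(\Omega)>\tfrac12 P(F_2)$. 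Via the relative isoperimetric inequality this gives $U'(r)\geq c\,U(r)^{(n-1)/n}$ and hence the $m$-independent density bound $|\Omega\cap B_1(x)|\geq c\min(1,m)$ of Lemma~\ref{lem-something}. The diameter bound $D\leq Cm$ then follows exactly by your ball-covering argument (Lemma~\ref{lem-diam}), and with it either your $V_{\alpha-1}$ route or the paper's exponential route closes the proof for all $\alpha\in(0,2)$.
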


For the proof, we first show that minimizers must be indecomposable.
On the other hand, we can cut any indecomposable set with sufficiently
large mass by a hyperplane into two large pieces. We move the two
pieces far apart from each other and compare the energy of the new set
with the original configuration. The resulting inequality can be
expressed as a differential inequality for the mass of cross-sections
of the minimizer by different hyperplanes. The proof is concluded by a
contradiction argument, whereby the resulting estimate of the total
mass is too high.

\medskip

Our result is restricted to the case $\alpha \in (0,2)$.  Note that
the above result does, in particular, apply to the physically
important special case of Coulomb interaction, i.e. $n = 3$ and
$\alpha = 1$ (see also \cite{lu12}). It is an interesting open
question if the non-existence result extends to arbitrary $\alpha \in
[2,n)$. In particular, for $\alp \to n$ the nonlocal energy is
dominated by short-range interactions.

  \medskip

\begin{theorem}[Scaling and equipartition of
  energy] \label{thm-scaling} %
  For all $n \geq 3$ and for all $\alpha \in (0,n)$ there exist two
  constants $C,c > 0$ only depending on $n$ and $\alpha$ such that for
  the energy in \eqref{EE} we have
  \begin{align} \label{EcC}%
    c \max \{ m^{\frac{n-1}n}, m \} \ \leq \ \inf_{|\Omega| = m} E(\Omega) \
    \leq \ C \max \{ m^{\frac{n-1}n}, m \}.
  \end{align}
  Furthermore, for $m \geq 1$ we have equipartition of energy, in the sense that
  for every set of finite perimeter $\Omega \subset \mathbb R^n$ satisfying
  $|\Omega| = m$ and $E(\Omega) \leq \beta m$ with some $\beta > 0$ we have
  \begin{align}
    \label{eq:1}
    c_\bet m \ %
    \leq \ %
    \min \{ P(\Omega) , V(\Omega) \} \ %
    \leq \ \max \{ P(\Omega) , V(\Omega) \} \ %
    \leq \ \beta m,
  \end{align}
  for some $c_\beta > 0$ only depending on $\alpha$, $n$ and $\beta$,
  but not on $m$.
\end{theorem}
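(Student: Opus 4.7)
The plan is to deduce every claim of the theorem from a single ``localization'' lower bound relating $V$, $P$ and $m$ through a free scale parameter $\eta > 0$, and then to optimize in $\eta$ in three different ways. For the upper bound in (\ref{EcC}) I exhibit explicit competitors: when $m \le 1$ I take $\Omega$ to be a ball of volume $m$, so that $P(\Omega) = n \omega_n^{1/n} m^{(n-1)/n}$ and $V(\Omega) = c_{n,\alpha} m^{(2n-\alpha)/n}$; for $m \le 1$ the second is dominated by the first, since $(2n-\alpha)/n > (n-1)/n$. When $m \ge 1$ I take the disjoint union of $k = \lceil m \rceil$ translates of a ball of mass $m/k \in [1/2,1]$, with centers separated pairwise by at least $R = m^{2/\alpha}$; each ball contributes $O(1)$ to the perimeter and to its self-interaction, and the pairwise non-local interactions sum to at most $C k^2 R^{-\alpha} = O(1)$, giving $E(\Omega) \le C m$.

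The core estimate is the following: for every set of finite perimeter $\Omega \subset \R^n$ with $|\Omega|=m$ and every $\eta > 0$,
\begin{equation*}
V(\Omega) \;\geq\; c_1 \eta^{n-\alpha} m \;-\; c_2 \eta^{n+1-\alpha} P(\Omega).
\end{equation*}
To derive it, I use the identity
\begin{equation*}
\int_{B_\eta(x) \cap \Omega} \frac{dy}{|x-y|^{\alpha}} \;=\; c_{n,\alpha}\,\eta^{n-\alpha} \;-\; \int_{B_\eta(x) \cap \Omega^c} \frac{dy}{|x-y|^{\alpha}},
\end{equation*}
and bound $v_\Omega(x)$ below by the left-hand side. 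Integrating over $x \in \Omega$ and changing variables $y = x + h$ in the correction term rewrites it as $\int_{B_\eta} |h|^{-\alpha} |\{x \in \Omega : x + h \notin \Omega\}|\, dh$, and the classical BV translation estimate $|\{x \in \Omega : x+h \notin \Omega\}| \le |h|\,P(\Omega)$ bounds this by $c_2 \eta^{n+1-\alpha} P(\Omega)$.

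From the localization inequality the remaining claims follow by elementary optimization in $\eta$. For $m \le 1$, the lower bound in (\ref{EcC}) is the classical isoperimetric inequality $E \ge P \ge n \omega_n^{1/n} m^{(n-1)/n}$. For $m \ge 1$, fixing $\eta$ equal to a small universal constant with $c_2 \eta^{n+1-\alpha} \le 1/2$ yields $E \ge P/2 + c_1 \eta^{n-\alpha} m \ge c\,m$. For the equipartition under $E(\Omega) \le \beta m$, which already gives $\max\{P,V\} \le \beta m$: the choice $\eta = c_1/(2 c_2 \beta)$ together with $P \le \beta m$ produces $V \ge c_\beta m$; and the full optimization of the localization inequality in $\eta$ (with optimum at $\eta^* \sim m/P$) gives the scale-invariant interpolation $V(\Omega)\,P(\Omega)^{n-\alpha} \ge c\, m^{n+1-\alpha}$, which together with $V \le \beta m$ forces $P \ge c'_\beta m$.

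The main obstacle is setting up the localization inequality with the correct $\eta$-exponents on both sides, so that the three subsequent $\eta$-optimizations all yield linear-in-$m$ bounds of the right form. The assumption $\alpha < n$ enters crucially here, as it is precisely what guarantees that $c_{n,\alpha} = \int_{B_1} |y|^{-\alpha}\, dy$ is finite and that the perimeter-weighted remainder $\int_{B_\eta} |h|^{1-\alpha}\,dh = c\,\eta^{n+1-\alpha}$ is integrable near zero. Once the localization inequality is in place, all remaining computations reduce to routine calculus in the parameter $\eta$.
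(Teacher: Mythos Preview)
Your proof is correct and, at its core, equivalent to the paper's. The paper's argument rests on the interpolation inequality (Lemma~\ref{lem-interpolation})
\[
m \ \leq \ C\, P(\Omega)^{\frac{n-\alpha}{n+1-\alpha}}\, V(\Omega)^{\frac{1}{n+1-\alpha}},
\]
whose proof is only referenced to the two-dimensional companion paper; from this multiplicative bound both the linear lower bound and the equipartition follow immediately. Your ``localization'' inequality $V \geq c_1 \eta^{n-\alpha} m - c_2 \eta^{n+1-\alpha} P$ is precisely the pre-optimized form of that interpolation inequality: optimizing in $\eta$ at $\eta^* \sim m/P$ recovers exactly $V P^{n-\alpha} \geq c\, m^{n+1-\alpha}$. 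The derivation you give---restricting $v_\Omega$ to $B_\eta(x)$, Fubini in $h = y - x$, and the BV translation estimate $|\Omega \setminus (\Omega - h)| \leq |h|\,P(\Omega)$---is indeed the standard route to such inequalities and is essentially what lies behind the cited lemma. Your upper bound construction (a single ball for $m \leq 1$, order-$m$ many well-separated unit balls for $m \geq 1$) matches the paper's as well. The only stylistic difference is that you keep $\eta$ free and optimize it three times for the three conclusions, whereas the paper performs the optimization once inside Lemma~\ref{lem-interpolation} and then quotes the resulting scale-invariant inequality; your version is slightly more self-contained and avoids the forward reference.
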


At the core of the proof of this theorem is the proof of the lower
bound of the energy. This estimate follows from an interpolation
inequality which connects interfacial and nonlocal parts of the
energy.

\section{Some basic estimates}
\label{s-bas}

We start our analysis with a few auxiliary lemmas that will be useful
in what follows. By a simple argument using the regularity result in
Proposition \ref{thm-regularity}, it follows that minimizers are
essentially bounded and indecomposable:

\begin{lemma}[Boundedness and connectedness of minimizers]
  \label{lem-indec}
  Let $\Omega \subset \mathbb R^n$ be a minimizer of \eqref{E} with
  $|\Omega| = m$. Then $\Omega$ is essentially bounded and
  indecomposable.
\end{lemma}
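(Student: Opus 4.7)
The plan is to prove essential boundedness first, leveraging the density estimates that Proposition~\ref{thm-regularity} puts at our disposal, and then derive indecomposability by a standard separation-translation argument.

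For essential boundedness, the key tool is the uniform density lower bound for quasiminimizers of the perimeter. The proof of Proposition~\ref{thm-regularity} already shows that $\Omega$ satisfies \eqref{qm} with a constant depending only on $n$, $\alpha$, and $m$, and the classical regularity theory of quasiminimizers (see, e.g., \cite{ambrosio98,tamanini84}) then supplies constants $c, r_0 > 0$, depending only on the same parameters, such that
\begin{align*}
|\Omega \cap B_r(y)| \ \geq \ c\, r^n \qquad \text{for all } y \in \partial^*\Omega \text{ and all } r \in (0,r_0].
\end{align*}
Assume, for contradiction, that $\Omega$ is not essentially bounded. I first observe that $\partial^*\Omega$ must then be unbounded as well: if $\partial^*\Omega \subset B_M(0)$, then since $|\nabla \chi_\Omega|$ coincides with $\HH^{n-1}\lfloor \partial^*\Omega$ (using $\HH^{n-1}(\partial^M\Omega \setminus \partial^*\Omega)=0$), the distributional derivative of $\chi_\Omega$ vanishes on the connected open set $B_M(0)^c$, so $\chi_\Omega$ is a.e. constant there, and $|\Omega|<\infty$ forces the constant to be $0$, contradicting essential unboundedness. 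Pick $y_k \in \partial^*\Omega$ with $|y_k|\to\infty$ and extract a subsequence with $|y_k - y_j| \geq 2r_0$ for $k\neq j$, so that the balls $B_{r_0}(y_k)$ are pairwise disjoint; summing the density estimates then yields
\begin{align*}
m \ = \ |\Omega| \ \geq \ \sum_{k=1}^\infty |\Omega \cap B_{r_0}(y_k)| \ \geq \ \sum_{k=1}^\infty c\, r_0^n \ = \ +\infty,
\end{align*}
a contradiction.

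For indecomposability I would argue as follows. Suppose $\Omega = A \sqcup B$ with $|A|, |B|>0$ and $P(\Omega) = P(A) + P(B)$. By the previous step, $A$ and $B$ are essentially bounded, so for $R$ large enough $A$ and $B + Re_1$ are essentially disjoint and $\Omega_R := A \cup (B + Re_1)$ satisfies $|\Omega_R|=m$ and $P(\Omega_R) = P(A) + P(B) = P(\Omega)$. Translation invariance of the kernel gives
\begin{align*}
V(\Omega_R) - V(\Omega) \ = \ 2 \int_A \int_B \bigl(|x - y - Re_1|^{-\alpha} - |x-y|^{-\alpha}\bigr)\,dx\,dy.
\end{align*}
Because $A$ and $B$ are bounded, dominated convergence drives the shifted term to $0$ as $R\to\infty$, while the unshifted cross integral is a strictly positive finite number (the integrand is positive and $|A|,|B|>0$). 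Hence $E(\Omega_R) < E(\Omega)$ with $|\Omega_R|=m$ for all $R$ sufficiently large, contradicting the minimality of $\Omega$.

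The main obstacle is Step~1, and specifically the guarantee that $c$ and $r_0$ in the density estimate are uniform in the base point $y\in\partial^*\Omega$; this uniformity is what makes the disjoint-ball sum diverge, and it is ensured by the fact that the quasiminimality constant produced in \eqref{Equasimin2} depends only on $n$, $\alpha$, and $m$. Once Step~1 is in hand, the small topological observation about $\partial^*\Omega$ and the translation argument in Step~2 are both entirely routine.
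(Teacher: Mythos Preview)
Your proof is correct and follows essentially the same approach as the paper: a uniform density lower bound (coming from the quasiminimality established in Proposition~\ref{thm-regularity}) forces essential boundedness via a disjoint-balls summation, and then a separation-translation comparison yields indecomposability. The only cosmetic difference is that the paper applies the density estimate at points of $\overline{\Omega}^M$ directly, whereas you apply it at points of $\partial^*\Omega$ and add the short (correct) argument that $\partial^*\Omega$ must be unbounded if $\Omega$ is.
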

\begin{proof}
  As was shown in the proof of Proposition \ref{thm-regularity}, we
  can apply \cite[Lemma 2.1.3]{Rigot-2000} guaranteeing that there
  exists $r > 0$ and $c > 0$ such that for every $x \in
  \overline{\Omega}^M$ we have $|\Omega \cap B_r(x)| \geq c r^n$. If
  $\Omega$ is not essentially bounded, then there exists a sequence
  $(x_k) \in \overline{\Omega}^M$ such that $x_k \to \infty$ and $|x_k
  - x_{k'}| > 2 r$ for all $k, k'$. Then clearly
    \begin{align}
      \label{xkminf}
      |\Omega| \ \geq \  \sum_k |\Omega \cap B_r(x_k)| \ %
      \geq \ \sum_k c r^n = \infty,
    \end{align}
    contradicting the fact that $|\Omega| = m < \infty$.

 % In particular, $|\Ome
 %  \cap B| \geq cm$. Now decompose $\R^n$ into cubes of side length $1$
 %  and with disjoint interior. Let $Q_i$, $i \in J \subset \N$ be the
 %  collection of all cubes such that $\dist(\partial^M \Ome, Q_i) <
 %  1$. By the above argument, we have $|\Ome| \geq c |J|$ where $J$ is
 %  the number of elements in $J$. In particular $J$ has only only
 %  finitely many elements and hence $\Ome$ is essentially bounded.

  \medskip
  
  To prove that the minimizers are indecomposable, suppose the
  opposite is true and that there exist two sets of finite perimeter
  $\Omega_1$ and $\Omega_2$ such that $\Omega_1 \cap \Omega_2 =
  \varnothing$ and $\Omega = \Omega_1 \cup \Omega_2$, with $P(\Omega)
  = P(\Omega_1) + P(\Omega_2)$. Since $\Omega$ and, hence, $\Omega_1$
  and $\Omega_2$ are essentially bounded, defining $\Omega_R :=
  \Omega_1 \cup (\Omega_2 + e_1 R)$, we have $|\Omega_R| = m$ and
  $P(\Omega_R) = P(\Omega)$ for $R > 0$ sufficiently large. At the
  same time, the nonlocal energy decreases:
  \begin{align}
    \label{EOmindec}
    \liminf_{R \to \infty} E(\Omega_R) & = P(\Omega_R) + V(\Omega_1)
    +
    V(\Omega_2) \nonumber \\
    & \hspace{-1cm} < P(\Omega) + V(\Omega_1) + V(\Omega_2) + 2
    \int_{\Omega_1} \int_{\Omega_2} {1 \over |x - y |} \, dx \, dy =
    E(\Omega).
  \end{align}
  Thus, choosing $R$ sufficiently large, we obtain $E(\Omega_R) <
  E(\Omega)$, contradicting the minimizing property of $\Omega$.
\end{proof}
Our next lemma yields a general criterion on a set of finite perimeter
being energetically unfavorable that will be helpful for several of
our proofs.
\begin{lemma}[Non-optimality criterion] \label{lem-split} %
  Let $F \subset \R^n$ be a set of finite perimeter. Suppose there is a
  partition of $F$ into two disjoint sets of finite perimeter $F_1$ and
  $F_2$ with positive measures such that
  \begin{align} \label{est-sigma} %
    \Sigma \ := \ P(F_1) + P(F_2) - P(F) \ \leq \
    \frac12 E(F_2).
  \end{align}
  Then there is $\eps > 0$ depending only on $n$ and $\alp$ such that
  if
  \begin{align} \label{om2eps1m} |F_2| \ \leq \ \eps \min \{ 1,
    |F_1| \},
  \end{align}
  there exists a set $G \subset \R^n$ such that $|G| =
  |F|$ and $E(G) < E(F)$.
\end{lemma}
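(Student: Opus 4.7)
The plan is to test the set $G := \lambda F_1$, where
$\lambda := ((|F_1|+|F_2|)/|F_1|)^{1/n} = (1+t)^{1/n}$, $t := |F_2|/|F_1| \leq \varepsilon$,
so that $|G| = |F|$ by construction. By the scaling rules for the two parts of the energy, $P(G) = \lambda^{n-1} P(F_1)$ and $V(G) = \lambda^{2n-\alpha} V(F_1)$, and the mean value theorem gives
$\lambda^{n-1} - 1, \ \lambda^{2n-\alpha} - 1 \leq C(n,\alpha)\, t$ uniformly for $t \in [0,1]$.

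Writing the decomposition
$E(F) = E(F_1) + E(F_2) - \Sigma + 2\, I(F_1, F_2), \ I(F_1,F_2) := \iint_{F_1 \times F_2} |x-y|^{-\alpha}\, dx\, dy \ \geq\ 0$,
and invoking the hypothesis $\Sigma \leq \tfrac12 E(F_2)$, I obtain
$E(G) - E(F) \ \leq \ C\, t\, (P(F_1) + V(F_1)) - \tfrac12 E(F_2) - 2\, I(F_1, F_2)$.
The remaining task is to show $C\, t\, (P(F_1) + V(F_1)) < \tfrac12 E(F_2)$ once $\varepsilon$ is chosen small enough, depending only on $n$ and $\alpha$.

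For the perimeter part, I use $P(F_1) \leq P(F) + \Sigma$ and observe that, for $|F_2| \leq \varepsilon$ below a threshold depending only on $n,\alpha$, the Riesz-rearrangement bound $V(F_2) \leq c_{n,\alpha}\, |F_2|^{(2n-\alpha)/n}$ together with the isoperimetric bound $P(F_2) \geq n\omega_n^{1/n}\, |F_2|^{(n-1)/n}$ and the exponent inequality $(2n-\alpha)/n > (n-1)/n$ force $V(F_2) \leq P(F_2)$; hence $\Sigma \leq \tfrac12 E(F_2) \leq P(F_2)$, so $P(F_1) \leq P(F)$. For the nonlocal part I use the Riesz bound $V(F_1) \leq c_{n,\alpha}\, |F_1|^{(2n-\alpha)/n}$. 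Combining these with the isoperimetric lower bound $E(F_2) \geq c_n\, |F_2|^{(n-1)/n}$ and the two smallness conditions $t \leq \varepsilon$ and $|F_2| \leq \varepsilon$, the desired inequality follows for $\varepsilon$ sufficiently small.

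The main technical obstacle I anticipate is controlling the nonlocal contribution $C\, t\, V(F_1)$ when $|F_1|$ is large: since $V(F_1)$ may grow like $|F_1|^{(2n-\alpha)/n}$, the product $C\, t\, V(F_1) \sim |F_2|\, |F_1|^{(n-\alpha)/n}$ is not uniformly absorbed into $\tfrac12 E(F_2) \sim |F_2|^{(n-1)/n}$ when $|F_1|$ is very large. In that residual regime I would switch to the alternative candidate $G := F_1 \cup (F_2 + R e_1)$ with $R \to \infty$, for which $\lim_{R\to\infty} E(G) - E(F) = \Sigma - 2\, I(F_1, F_2)$; a local analysis near the shared reduced boundary $\partial^* F_1 \cap \partial^* F_2$ shows that the potential $v_{F_1}$ there is comparable to $|F_1|^{(n-\alpha)/n}$, so that $2\, I(F_1, F_2) > \Sigma$ precisely in the complementary regime, yielding $E(G) < E(F)$ for $R$ large.
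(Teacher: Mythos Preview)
Your first candidate $G=\lambda F_1$ and the resulting inequality
$E(G)-E(F)\le C\,t\,E(F_1)-\tfrac12 E(F_2)$ are exactly what the paper uses, but the way you close the estimate has a real gap. You reduce the perimeter term to $P(F_1)\le P(F)$ and then say ``the desired inequality follows,'' but $P(F)$ is not bounded a priori: the lemma is stated for an arbitrary set of finite perimeter, so $P(F)$ (and hence $C\,t\,P(F_1)$) can be as large as you like even for fixed $m$. Likewise, your Riesz bound $V(F_1)\le c\,m_1^{(2n-\alpha)/n}$ is correct but, as you note, does not absorb into $\tfrac12 E(F_2)\sim m_2^{(n-1)/n}$ for large $m_1$. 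The paper deals with \emph{both} obstructions at once by introducing a second comparison set $\hat F$ consisting of $N$ well-separated balls of mass $\le 1$ with $|\hat F|=|F|$. Either $E(\hat F)<E(F)$ and one is done with $G=\hat F$, or else $E(F)\le E(\hat F)\le C\max\{m,m^{(n-1)/n}\}$; this a priori bound, together with $E(F_1)<E(F)$ (which follows from $\Sigma\le\tfrac12 E(F_2)$ and $V(F_1)+V(F_2)<V(F)$), gives $C\,t\,E(F_1)\le C'\max\{m_2,\eps^{1/n}m_2^{(n-1)/n}\}$, which is then beaten by $c\,m_2^{(n-1)/n}$ for $\eps$ small.

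Your proposed fallback $G=F_1\cup(F_2+Re_1)$ does not fill the gap. First, it addresses only the large-$m_1$ regime for $V(F_1)$ and says nothing about the case of large $P(F)$ with moderate $m$. Second, the claim that $v_{F_1}$ is ``comparable to $|F_1|^{(n-\alpha)/n}$'' on $\partial^*F_1\cap\partial^*F_2$ is not true for general sets (that scaling is specific to balls), and even if a pointwise lower bound held there, the resulting estimate $I(F_1,F_2)\gtrsim m_1^{(n-\alpha)/n}m_2$ does not dominate $\Sigma\lesssim m_2^{(n-1)/n}$ in the regime you need, since the comparison $m_1^{(n-\alpha)/n}m_2\gtrsim m_2^{(n-1)/n}$ is equivalent to $m_2\,m_1^{\,n-\alpha}\gtrsim 1$, which fails e.g.\ for $m_1$ large but $m_2$ extremely small. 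The missing idea is precisely the paper's: bring in an explicit test configuration to force an a priori energy bound on $F$.
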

\begin{proof}
  Let $m_1 := |F_1|$, $m_2 := |F_2|$ and let $\gamma :=
  \frac{m_2}{m_1} \leq \eps$.  We will compare $F$ with the
  following two sets:
  \begin{itemize}
  \item The set $\tilde F$ given by $\tilde F = \ell
    F_1$, where $\ell := \sqrt[n]{1 + \gamma}$. In particular,
    $|\tilde F| = |F|$.
  \item The set $\hat F$ given by a collection of $N \geq 1$ balls of
    equal size and with centers located at $x = jR e_1$, $j = 1,
    \ldots, N$, with $R$ large enough. The number $N$ is chosen to be
    the smallest integer for which the mass of each ball does not
    exceed 1 and such $|\hat F| = |F|$.
  \end{itemize}
  If $E(\hat F) < E(F)$, then the assertion of the lemma
  holds true and the proof is concluded. Therefore, in the following
  we may assume
  \begin{align} \label{ip} %
    E(F) \ \leq \ E(\hat F) \ \leq \ C \max \big \{ m,
    m^{\frac{n-1}n} \big \},
  \end{align}
  for some $C > 0$, where the last inequality is obtained by direct computation.

  \medskip

  It hence remains to show that under the assumption \eqref{ip} and
  for sufficiently small $\eps$, we have $E(\tilde F) < E(F)$. We
  first note that in view of the scaling of interfacial and nonlocal
  energies, we have
  \begin{align}
    \label{Omtildeupper}
    E(\tilde F) \ %
    &= \ \ell^{n-1} P(F_1) + \ell^{2n-\alpha} \NL(F_1)  \notag \\
    &\leq \ E(F_1) + \left( (\ell^{n-1}-1) + (\ell^{2n-\alpha}-1)
    \right) E(F_1).
  \end{align}
  Choosing $\eps \leq 1$, we have $1 \leq \ell \leq (1 + \eps)^{\frac
    1n} \leq 2^{1 \over n}$ and, therefore, by Taylor's formula we
  obtain $\ell^{n-1} - 1 \leq K (\ell - 1)$ and $\ell^{2n-\alpha} -1
  \leq K (\ell - 1)$ for some $K > 0$ independent of
  $\ell$. Furthermore, since $\ell-1 \leq \gamma$ and by
  \eqref{Omtildeupper}, we arrive at $ E(\tilde F) - E(F_1) \leq 2
  \gamma K E(F_1)$. By the definition of $\Sigma$ and with \eqref{E},
  this implies
  \begin{align}
    E(\tilde F) - E(F) \ %
    &\leq \ V(F_1) + V(F_2) - V(F) + \Sigma -
    E(F_2) +  2 \gamma K E(F_1) \nonumber \\
    % &\upref{est-sigma}\leq -2 \int_{F_1} \int_{F_2} {1 \over |x -
    %   y|^\alp} \, dx dy - \frac12 E(F_2) + 2 \gamma K
    % E(F_1) \nonumber \\
      &\upref{est-sigma}< \ - \frac12 E(F_2) + 2 \gam K E(F_1),
    \label{rhs-of}
  \end{align}
  where for the second estimate, we also used the fact that $V(F_1) +
  V(F_2) < V(F)$. By positivity of $V$ and the isoperimetric
  inequality we have $E(F_2) > P(F_2) \geq c m_2^{\frac{n-1}n}$ for
  some $c > 0$. Furthermore, a straightforward calculation using
  \eqref{est-sigma} and $V(F) > V(F_1) + V(F_2)$ yields
  % \begin{align}
  %   E(F_1) \ %
  %   &\leq \ P(F_1) + V(F_1) \ %
  %   \leq \ \frac 12 E(F_2) + P(F) - P(F_2) + V(F_1) \\ %
  %   &\leq \ E(F) + \frac 12 E(F_2) - P(F_2) + V_1(F) - V(F) \ %
  %   \leq \ E(F) + \frac 12 V(F_2) + V_1(F) \\
  %   &< \ E(F)
  % \end{align}
  $E(F_1) < E(F)$, so that \eqref{rhs-of} turns into
  \begin{align}
    E(\tilde F) - E(F) \ %
    &\upref{rhs-of}\leq - c m_2^{n - 1 \over n} + C \gam E(F)
    \notag \\ %
    & \upref{ip} \leq - c m_2^{n - 1 \over n} + C \max\{
    m_2, \eps^{1 \over n} m_2^{n-1 \over n} \},
    \label{deltaEm2}
  \end{align}
  for some $C, c > 0$, where we also used that $\gam m \leq 2 m_2$ and
  that $\gam \leq \eps$ by \eqref{om2eps1m}. Then, since $m_2 \leq
  \eps$ by \eqref{om2eps1m} as well, the assertion of the lemma
  follows for $\eps$ sufficiently small.
\end{proof}

Our next lemma is an improvement of the standard density estimate for
quasi-minimizers of the perimeter to a uniform estimate independent of
$\Omega$.
\begin{lemma}[Uniform density bound] \label{lem-something} %
  Let $\Omega \subset \mathbb R^n$ be a minimizer of \eqref{E} with $|\Omega| =
  m$. Then for every $x \in \overline \Ome^M$ we have for some $c = c(\alpha, n)
  > 0$,
  \begin{align}
    |\Omega \cap B_1(x)| \ \geq \ c \min \{ 1, m \}.
  \end{align}
\end{lemma}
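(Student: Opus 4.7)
The plan is to argue by contradiction: assume $|\Omega \cap B_1(x)| < c_0 \min\{1, m\}$ for some small constant $c_0 = c_0(n, \alpha) > 0$ to be chosen below, and derive a contradiction with the minimality of $\Omega$. Writing $V(r) := |\Omega \cap B_r(x)|$ and $\phi(r) := \mathcal{H}^{n-1}(\Omega^{(1)} \cap \partial B_r(x))$ (which equals $V'(r)$ for a.e. $r$ by the co-area formula), the goal is to establish a differential inequality of the form $V'(r) \gtrsim V(r)^{(n-1)/n}$ on $(0, 1)$, integrate it from $r = 0$ (where $V(0^+) = 0$) up to $r = 1$, and obtain a uniform lower bound $V(1) \geq (c/n)^n$ that, for $c_0$ sufficiently small, contradicts the standing hypothesis.

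To produce the differential inequality, I will compare $\Omega$ with the rescaled competitor $\Omega_r := \ell_r(\Omega \setminus B_r(x))$, where $\ell_r := (m/(m - V(r)))^{1/n}$ so that $|\Omega_r| = m$. The scaling of the two contributions in \eqref{E} gives $E(\Omega_r) = \ell_r^{n-1} P(\Omega \setminus B_r(x)) + \ell_r^{2n-\alpha} V(\Omega \setminus B_r(x))$. Using the identity $P(\Omega \setminus B_r(x)) = P(\Omega) - \mathcal{H}^{n-1}(\partial^*\Omega \cap B_r(x)) + \phi(r)$, together with the monotonicity $V(\Omega \setminus B_r(x)) \leq V(\Omega)$, and then a first-order Taylor expansion $\ell_r^k - 1 \leq C\, V(r)/m$ (valid as long as $V(r)/m$ is small), the minimality inequality $E(\Omega) \leq E(\Omega_r)$ rearranges into
\[
\mathcal{H}^{n-1}(\partial^* \Omega \cap B_r(x)) \leq \phi(r) + C \frac{V(r)}{m} E(\Omega).
\]
Adding $\phi(r)$ to both sides and invoking the isoperimetric inequality applied to $\Omega \cap B_r(x)$, whose perimeter is $\mathcal{H}^{n-1}(\partial^* \Omega \cap B_r(x)) + \phi(r)$, yields
\[
n\omega_n^{1/n} V(r)^{(n-1)/n} \leq 2\phi(r) + C \frac{V(r)}{m} E(\Omega).
\]

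To close the argument I need the last term to be dominated by $V(r)^{(n-1)/n}$. For this I will use the elementary upper bound $E(\Omega) \leq C \max\{m, m^{(n-1)/n}\}$, which follows by testing the energy against a single ball when $m \leq 1$ and against a finite collection of disjoint unit-mass balls when $m \geq 1$ (so it does not rely on Theorem~\ref{thm-scaling}). This gives $C V(r) E(\Omega)/m \leq C V(r) \max\{1, m^{-1/n}\}$, which under the contradiction hypothesis $V(r) \leq V(1) < c_0 \min\{1, m\}$ is at most $\tfrac{1}{2} n \omega_n^{1/n} V(r)^{(n-1)/n}$ once $c_0$ is small, because in both regimes the ratio $V(r)^{1/n}/\min\{1,m\}^{1/n}$ is bounded by $c_0^{1/n}$. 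Absorbing then yields $V'(r) \geq c\, V(r)^{(n-1)/n}$ for a.e. $r \in (0,1)$.

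Finally, since $x \in \overline{\Omega}^M$ forces $V(r) > 0$ for every $r > 0$ while $V(r) \to 0$ as $r \to 0$, rewriting the ODE as $(V^{1/n})'(r) \geq c/n$ on any interval bounded away from the origin (where $V^{1/n}$ is absolutely continuous) and letting the left endpoint tend to zero gives $V(1)^{1/n} \geq c/n$. Choosing $c_0 < (c/n)^n$ contradicts $V(1) < c_0 \min\{1, m\} \leq c_0$. The main obstacle I anticipate is the competitor estimate itself: one must carefully track the scaling factor $\ell_r$ acting simultaneously on $P$ and $V$, and verify that the resulting constants depend only on $n$ and $\alpha$ rather than on $m$, since it is precisely this $m$-independence that upgrades the standard quasi-minimality density bound to the uniform statement claimed here.
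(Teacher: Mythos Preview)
Your proof is correct and follows essentially the same approach as the paper's: both use the rescaled competitor $\ell\,(\Omega \setminus B_r(x))$ (which the paper packages in Lemma~\ref{lem-split}) together with the test-configuration upper bound $E(\Omega) \leq C\max\{m, m^{(n-1)/n}\}$ to derive the differential inequality $U'(r) \geq c\, U(r)^{(n-1)/n}$, and then integrate. The only cosmetic difference is that the paper works directly on the interval $(0, r_0)$ with $r_0 = C\min\{1, m^{1/n}\}$ and evaluates at $r_0$, whereas you argue by contradiction on all of $(0,1)$ under the standing hypothesis $V(1) < c_0\min\{1,m\}$; these are equivalent ways of enforcing the same smallness needed for the Taylor expansion of $\ell_r$.
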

\begin{proof}
  For given $r > 0$ and $x \in \overline \Omega^M$, define the sets
  $F_2^r := \Omega \cap B_r(x)$ and $F_1^r := \Omega \backslash
  B_r(x)$.  Since $|F_1^r| + |F_2^r| = m$ and $|F_2^r| \leq \omega_n
  r^n$, there exists $C > 0$ depending only on $\alpha$ and $n$ such
  that the assumption \eqref{om2eps1m} of Lemma \ref{lem-split} is
  satisfied for all $r \leq r_0 := C \min(1, m^{1/n})$. Since $\Omega$
  is a minimizer, \eqref{est-sigma} hence cannot be satisfied for any
  $r \leq r_0$. That is, for all $r \leq r_0$ we have
  \begin{align} \label{SigP} %
    \Sigma^r := P(F_1^r) + P(F_2^r) - P(\Omega) > \frac12 E(F_2^r) > \frac12
    P(F_2^r).
  \end{align}
  On the other hand, by \cite[Proposition 1]{AmbrosioCasellesEtal-2001} and
  \cite[Theorem 3.61]{AmbrosioFuscoEtal-Book}, we have $\Sigma^r = 2 \mathcal
  H^{n-1} (\partial^* F_1^r \cap \partial^* F_2^r)$. In fact, since all the
  points belonging to $\partial^* F_1^r$ and $\partial^* F_2^r$ are supported on
  $\partial B_r(x)$ and have density 1/2 by \cite[Theorem
  3.61]{AmbrosioFuscoEtal-Book}, we have $\partial^* F_1^r \cap \partial^* F_2^r
  = \mathring{\Omega}^M \cap \partial B_r(x)$. Together with \eqref{SigP} this
  yields
  \begin{align}
    \label{SigPP}
    2 \mathcal H^{n-1} \left( \mathring{\Omega}^M \cap \partial B_r(x)
    \right) > \frac12 \left( \mathcal H^{n-1} (\partial^* \Omega \cap
      B_r(x)) + \mathcal H^{n-1} ( \mathring{\Omega}^M \cap \partial
      B_r(x) \right).
  \end{align}
  We now rearrange terms in \eqref{SigPP} and apply the relative
  isoperimetric inequality to the right-hand side, noting that if
  $|\Omega \cap B_r(x)| \geq \frac12 \omega_n r^n$ for some $r < r_0$,
  the conclusion still holds. This results in
  \begin{align}
    \label{SigPPP}
    \mathcal H^{n-1} \left( \mathring{\Omega}^M \cap \partial B_r(x)
    \right) \geq c \left| \Omega \cap B_r(x) \right|^{n - 1 \over n},
  \end{align}
  for some $c > 0$ depending only on $n$. Finally, denoting $U(r) :=
  \left| \Omega \cap B_r(x) \right|$ and since by Fubini's Theorem
  $dU(r)/dr = \mathcal H^{n-1} \left( \mathring{\Omega}^M
    \cap \partial B_r(x) \right)$ for a.e. $r < r_0$ by co-area
  formula, we arrive at the differential inequality
  \begin{align}
    \label{dUdr}
    {d U(r) \over dr} \geq c U^{n - 1 \over n}(r) \qquad \text{for
      a.e. } r < r_0.
  \end{align}
  Together with the fact that since $x \in \overline{\Omega}^M$, we
  have $U(r) > 0$ for all $r > 0$, which implies that $U(r) \geq c
  r^n$ for some $c > 0$ depending only on $n$ for all $r \leq
  r_0$. The statement of the lemma then follows by choosing $r = r_0$.
\end{proof}

We next establish a basic regularity result for the potential $v$
defined in \eqref{v} for the range of $\alpha$ used in Theorem
\ref{thm-ball}.

\begin{lemma}
  \label{l:dvinfty}
  Let $F$ be a measurable set with $|F| \leq m$ for some $m > 0$. Then
  $\| v_F \|_{L^\infty(\mathbb R^n)} \leq C$ for some $C > 0$
  depending only on $\alpha$, $n$ and $m$. If, in addition, $\alpha
  \in (0, n - 1)$, then also $\| v_F \|_{W^{1,\infty}(\mathbb R^n)}
  \leq C'$ for some $C' > 0$ depending only on $\alpha$, $n$ and $m$.
\end{lemma}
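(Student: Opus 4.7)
The plan is to exploit the bathtub principle (equivalent to symmetric decreasing rearrangement of $\chi_F$), which for any non-negative, radially symmetric, decreasing function $f$ and any measurable $F$ with $|F| \le m$ gives
\[
  \int_F f(|x - y|)\,dy \ \le \ \int_{B_{r_m}(x)} f(|x - y|)\,dy,
\]
where $r_m := (m/\omega_n)^{1/n}$. Applied with $f(s) = s^{-\alpha}$, which is radially decreasing and integrable near the origin in $\mathbb R^n$ precisely when $\alpha < n$, a polar-coordinate computation yields
\[
  v_F(x) \ \le \ n \omega_n \int_0^{r_m} s^{n - 1 - \alpha}\,ds \ = \ \frac{n \omega_n\, r_m^{n - \alpha}}{n - \alpha}
\]
uniformly in $x$, which proves the first assertion.

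For the Lipschitz estimate under the additional assumption $\alpha < n - 1$, the key observation is that the same rearrangement argument applied to $f(s) = s^{-\alpha - 1}$ (still integrable at the origin since $\alpha + 1 < n$) gives a uniform bound
\[
  \int_F |x - y|^{-\alpha - 1}\,dy \ \le \ K(\alpha, n, m) \qquad \text{for all } x \in \mathbb R^n.
\]
Fix $x, x' \in \mathbb R^n$ and set $h := |x - x'|$. I would split
\[
  v_F(x) - v_F(x') \ = \ \int_F \bigl( |x - y|^{-\alpha} - |x' - y|^{-\alpha} \bigr)\,dy
\]
into a near contribution over $F_{\mathrm{near}} := F \cap (B_{2h}(x) \cup B_{2h}(x'))$ and a far contribution over $F_{\mathrm{far}} := F \setminus F_{\mathrm{near}}$.

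For $y \in F_{\mathrm{far}}$ one has $|x - y|, |x' - y| \ge 2h$, so every $z$ on the segment from $x$ to $x'$ satisfies $|z - y| \ge \tfrac12 |x - y|$. The mean value theorem yields
\[
  \bigl| |x - y|^{-\alpha} - |x' - y|^{-\alpha} \bigr| \ \le \ 2^{\alpha + 1} \alpha\, h\, |x - y|^{-\alpha - 1},
\]
and integrating over $F_{\mathrm{far}} \subset F$ and invoking the uniform bound on $\int_F |x - y|^{-\alpha - 1}\,dy$ shows that the far contribution is at most $2^{\alpha + 1} \alpha K h$. For the near contribution I would apply the bathtub bound of the first part to each of $\int_{F_{\mathrm{near}}} |x - y|^{-\alpha}\,dy$ and $\int_{F_{\mathrm{near}}} |x' - y|^{-\alpha}\,dy$ using $|F_{\mathrm{near}}| \le 2 \omega_n (2h)^n$, producing a bound of order $h^{n - \alpha}$. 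Since $n - \alpha > 1$, this is dominated by $h$ for $h \le 1$; combining with the trivial estimate $|v_F(x) - v_F(x')| \le 2 \|v_F\|_\infty$ for $h \ge 1$ yields a uniform Lipschitz constant depending only on $\alpha$, $n$, and $m$.

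The main subtlety is calibrating the splitting threshold (here $2h$) so that on the far piece the mean-value estimate produces an integrable singularity—this uses precisely $\alpha + 1 < n$—while on the near piece the measure is small enough that the bathtub bound in $h^{n - \alpha}$ is $O(h)$. The argument degenerates exactly at $\alpha = n - 1$, consistent with the paper's remark that the near-field-dominated regime requires different techniques.
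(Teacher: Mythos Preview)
Your argument is correct, but the paper's proof is considerably shorter and more direct. For the $L^\infty$ bound the paper simply splits the domain of integration into $B_1(x)$ and its complement, obtaining $v_F(x)\le \int_{B_1(0)}|y|^{-\alpha}\,dy + |F|$; your bathtub argument is a valid alternative that even gives the sharp constant, but is not needed. For the Lipschitz bound the paper differentiates under the integral sign in one stroke,
\[
  |\nabla v_F(x)| \ \le \ \alpha \int_F |x-y|^{-\alpha-1}\,dy \ \le \ \alpha \int_{B_1(x)} |x-y|^{-\alpha-1}\,dy + \alpha\,|F|,
\]
which is finite exactly when $\alpha+1<n$. Your difference-quotient argument with the near/far splitting is a sound and more hands-on route that avoids having to justify the interchange of differentiation and integration, and it makes the role of the threshold $\alpha=n-1$ very transparent; but it reproduces, in a longer way, the same core estimate $\int_F |x-y|^{-\alpha-1}\,dy\le K(\alpha,n,m)$ that the paper obtains immediately.
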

\begin{proof}
  Boundedness of $v_F$ follows by the same argument as in the proof of
  Proposition \ref{thm-regularity}. Differentiating \eqref{v} in $x$,
  we obtain
  \begin{align}
    \label{dvinfty}
    |\nabla v_F(x)| \leq \alpha \int_F {1 \over | x - y |^{\alpha + 1}
    } \, d y \leq \alpha \int_{B_1(x)} {1 \over | x - y |^{\alpha + 1}
    } \, d y + \alpha |F| \leq C,
  \end{align}
  %for some $C > 0$ independent of $x$, where we \ed{use } that $|x|^{-\alpha}$
  %is differentiable outside the origin with derivative in $L^1_{loc}(\mathbb
  %R^n)$
%for all
  whenever $\alpha \in (0, n - 1)$.
\end{proof}

Let us point out that if $F$ also has a sufficiently smooth boundary,
then the potential $v_F$ may be estimated precisely near the
boundary. In particular, for the potential of the ball $v_B$
%if 
%\begin{align}
%  \label{vB}
%  v^B(x) := \int_{B_1(0)} {1 \over |x - y|^\alpha} \, dy,
%\end{align}
%then 
it is not difficult to see that for $r := |x| - 1$ and for $v_0 := v_{B}
\big|_{r=0}$ the leading order behavior of $v_B$ near the boundary is given by
\begin{align}
  \label{v-est} %
  v_0 - v^B(x) \ \sim \ %
  \begin{cases}
    r, &\text{if }
    \alpha < n-1, \\
    r \ln |r|, &\text{if } \alpha = n-1,  \\
    r^{n-\alpha}, & \text{if } \alpha > n-1.
  \end{cases}
\end{align} 
In particular, for $\alpha > n - 1$ the statement of Lemma
\ref{l:dvinfty} is false, even for a ball.

\medskip

The next lemma provides a tool
%Finally, we prove a lemma that 
%  allows us 
to compare the non-local part of the energy of two sets in terms of the
potential of one of the two sets:
  % , which is associated with positive definiteness of the considered kernels.

\begin{lemma} \label{l:posdef} %
  Let $F$ and $G$ be measurable subsets of $\mathbb R^n$ with $|F| =
  |G| < \infty$. Let $v_F$ be the potential defined in \eqref{v}. Then
  for any $c \in R$, we have
    \begin{align}
      \label{VvFvG}
      V(F) - V(G) \ \leq \ 2 \bigg( \int_{F \backslash G} (v_F(x) - c) \, dx -
      \int_{G \backslash F} (v_F(x) - c) \, dx \bigg).
    \end{align}
  \end{lemma}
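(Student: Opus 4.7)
The plan is to use the positive definiteness of the Riesz kernel $K(x,y) := |x-y|^{-\alpha}$ for $\alpha \in (0,n)$, which holds because its Fourier transform is a positive multiple of $|\xi|^{\alpha-n}$. Writing $h := \chi_F - \chi_G = \chi_{F\setminus G} - \chi_{G\setminus F}$ and expanding $\chi_G \otimes \chi_G = (\chi_F - h) \otimes (\chi_F - h)$, I would compute
\begin{align}
V(G) - V(F) \ = \ -2 \int_{\R^n} v_F(x) h(x) \, dx \ + \ \int_{\R^n}\int_{\R^n} K(x,y) h(x) h(y) \, dx \, dy.
\end{align}

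Next I would invoke positive definiteness of $K$: via Plancherel and the convolution theorem, the double integral equals $c_{n,\alpha} \int_{\R^n} |\xi|^{\alpha - n} |\hat h(\xi)|^2 \, d\xi \geq 0$. (The assumption $|F| = |G| < \infty$ guarantees $h \in L^1 \cap L^\infty$, so the Fourier-side integral is well-defined, possibly with a truncation/mollification argument handling the local singularity in $|\xi|^{\alpha-n}$ at $\xi=0$.) Dropping this nonnegative term and flipping the sign yields
\begin{align}
V(F) - V(G) \ \leq \ 2 \int_{\R^n} v_F(x) h(x) \, dx \ = \ 2 \bigg( \int_{F \setminus G} v_F(x) \, dx - \int_{G \setminus F} v_F(x) \, dx \bigg).
\end{align}

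Finally, to introduce the constant $c$, I would use the mass constraint $|F| = |G|$, which gives $\int h \, dx = |F \setminus G| - |G \setminus F| = 0$, so subtracting $2c \int h \, dx = 0$ from the right-hand side produces the claimed inequality with $v_F$ replaced by $v_F - c$.

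The only real obstacle is justifying $\iint K h \, h \geq 0$ rigorously; the cleanest path is the Fourier-side identity above, but since $|\xi|^{\alpha-n}$ is locally integrable exactly when $\alpha < n$ (which is our hypothesis), no further care is needed beyond noting $\hat h \in L^\infty$ and $\hat h \to 0$ at infinity to ensure the integral converges. Everything else is algebraic manipulation.
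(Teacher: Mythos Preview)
Your proposal is correct and follows essentially the same approach as the paper: the paper also expands $V(F)-V(G)$ via the identity $\chi_F\chi_F-\chi_G\chi_G = 2\chi_F(\chi_F-\chi_G) - (\chi_F-\chi_G)^2$, invokes positive definiteness of the Riesz kernel through the Fourier transform (citing Lieb--Loss) to drop the quadratic term, and then uses $|F|=|G|$ to insert the constant $c$. Your expansion via $\chi_G = \chi_F - h$ is algebraically equivalent, and your justification of the Fourier-side positivity is in fact slightly more explicit than the paper's.
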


  \begin{proof}
    Let $\chi_F$ and $\chi_G$ be the characteristic functions of the
    sets $F$ and $G$, respectively. After some straightforward
    algebra one can write
    \begin{align}
      \label{FGalg}
      V(F) - V(G) \ & = \ \int_{\mathbb R^n} \int_{\mathbb R^n}
      {\chi_F(x) \chi_F(y) \over |x - y|^\alpha} \, dx \, dy -
      \int_{\mathbb R^n} \int_{\mathbb R^n}
      {\chi_G(x) \chi_G(y) \over |x - y|^\alpha} \, dx \, dy \notag \\
      & = \int_{\mathbb R^n} \int_{\mathbb R^n} {(\chi_F(x) +
        \chi_G(x)) (\chi_F(y) -
        \chi_G(y)) \over |x - y|^\alpha} \, dx \, dy \notag \\
      & = 2 \int_{\mathbb R^n} v_F(x) (\chi_F(x) - \chi_G(x)) \, dx
      \notag \\ & \qquad - \int_{\mathbb R^n} \int_{\mathbb R^n}
      {(\chi_F(x) - \chi_G(x)) (\chi_F(y) - \chi_G(y)) \over |x -
        y|^\alpha} \, dx \, dy.
    \end{align}
    In fact, since both $\chi_F$ and $\chi_G$ belong to $L^1(\mathbb
    R^n) \cap L^\infty(\mathbb R^n)$, one can use the Fourier
    transform to show that the last integral in \eqref{FGalg} is
    positive, see e.g. \cite{LiebLoss-2001}. Furthermore, since $|F| =
    |G|$, the right hand side of \eqref{FGalg} does not change if we
    replace $v_F(x)$ by $v_F(x) + c$ for arbitrary $c \in \R$. The
    assertion of the lemma follows.
  \end{proof}

\section{Existence}
\label{s-exist}

We prove existence of minimizers of $E$ with prescribed mass by suitably
localizing the minimizing sequence, which is possible for sufficiently small
mass, when the perimeter is the dominant term in the energy.

\begin{lemma}[Comparison with set of bounded
  support] \label{lem-bounded} %
  There is $m_{1} = m_{1}(\alp,n) > 0$ such that for every $m
  \leq m_{1}$ and every set of finite perimeter $F$ with $|F| = m$
  there exists a set of finite perimeter $G$ such that
  \begin{align}
    E(G) \leq E(F), %
    &&\text{and}&& %
    G \subset B_1.
  \end{align}
\end{lemma}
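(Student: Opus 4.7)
The strategy is to compare $F$ either with a single ball of volume $m$ (if $E(F)$ is large) or with a radially truncated and rescaled version of $F$ (if $F$ is already close to such a ball). Let $r_0 := (m/\omega_n)^{1/n}$ denote the radius of a ball of volume $m$, so $r_0 \leq 1$ whenever $m \leq \omega_n$. First I consider the case $E(F) \geq E(B_{r_0})$, in which I take $G := B_{r_0}$ centered at the origin and conclude immediately.

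Otherwise $E(F) < E(B_{r_0}) = n\omega_n^{1/n} m^{(n-1)/n} + V(B_{r_0})$, and since $V(B_{r_0}) = O(m^{(2n-\alpha)/n}) = o(m^{(n-1)/n})$ for $\alpha < n+1$, the isoperimetric deficit satisfies $D(F) \leq C m^{(n-\alpha+1)/n}$. By \eqref{qiso}, after a translation I may assume $|F \triangle B_{r_0}| \leq C m^{1+\gamma}$ with $\gamma := (n-\alpha+1)/(2n) > 0$. Next I select a good truncation radius via a coarea argument. Setting $\rho(R) := |F \setminus B_R|$, one has $-\rho'(R) = \HH^{n-1}(F \cap \partial B_R)$ for a.e. $R$; since $R \mapsto \rho(R)^{1/n}$ is nonincreasing with total decrease at most $m^{1/n}$, the set where $-\rho'(R) > c_1 \rho(R)^{(n-1)/n}$ has Lebesgue measure at most $n m^{1/n}/c_1$. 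Fixing $c_1 < \tfrac12 n \omega_n^{1/n}$ (depending only on $\alpha, n$), this bound is at most a constant multiple of $r_0$, so I find $R^* \in [r_0, C r_0]$ satisfying $-\rho'(R^*) \leq c_1 \rho(R^*)^{(n-1)/n}$.

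Set $F_1 := F \cap B_{R^*}$, $F_2 := F \setminus B_{R^*}$, $\mu := |F_2|$, $\ell := (m/|F_1|)^{1/n}$, and $G := \ell F_1$. Then $|G| = m$ and $G \subset B_{\ell R^*}$; since $\mu \leq C m^{1+\gamma}$ forces $\ell \leq 2$ while $R^* \leq C r_0 \to 0$, one gets $G \subset B_1$ for $m \leq m_1(\alpha, n)$ sufficiently small. The energy difference expands as
\begin{align*}
E(F) - E(G) \ =&\ \bigl[P(F_2) - \Sigma - (\ell^{n-1}-1) P(F_1)\bigr] \\
&+ \bigl[V(F_2) + 2 W - (\ell^{2n-\alpha}-1) V(F_1)\bigr],
\end{align*}
where $\Sigma := 2 \HH^{n-1}(\mathring{F}^M \cap \partial B_{R^*}) = -2 \rho'(R^*)$ and $W := \int_{F_1}\int_{F_2}|x-y|^{-\alpha}\, dx\, dy$. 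Since $2c_1 < n\omega_n^{1/n}$, the isoperimetric bound $P(F_2) \geq n\omega_n^{1/n} \mu^{(n-1)/n}$ strictly dominates $\Sigma \leq 2 c_1 \mu^{(n-1)/n}$ by a definite positive multiple of $\mu^{(n-1)/n}$. The rescaling corrections $(\ell^{n-1}-1) P(F_1) \leq C \mu m^{-1/n}$ and $(\ell^{2n-\alpha}-1) V(F_1) \leq C \mu m^{(n-\alpha)/n}$ are absorbed into this leading positive term using $\mu \leq C m^{1+\gamma}$ and $\alpha < n$, yielding $E(G) \leq E(F)$ for $m$ small. The case $\mu = 0$ is trivial with $G := F \subset B_{R^*} \subset B_1$.

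The main obstacle is the regime of very small $\mu$: the energy gain $E(F_2)$ from removing $F_2$ vanishes with $\mu$, so Lemma \ref{lem-split} cannot be directly invoked. Overcoming this requires the \emph{pointwise} (not $L^1$-averaged) interface bound $\Sigma \leq 2 c_1 \mu^{(n-1)/n}$ extracted from the coarea argument, which matches the scaling of the isoperimetric lower bound on $P(F_2)$ and allows the direct rescaling comparison to close uniformly for all $\mu \in [0, C m^{1+\gamma}]$.
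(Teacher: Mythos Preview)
Your argument is correct and follows a route that is closely related to, but organized differently from, the paper's proof. Both proofs start identically: compare with $B_{r_0}$, bound the isoperimetric deficit, and use \eqref{qiso} to get $|F\setminus B_{r_0}|\le C m^{1+\gamma}$ after translation. From there the paper argues by dichotomy: for each radius $\rho>r_0$, either $\Sigma\le\tfrac12 E(F_2)$ and Lemma~\ref{lem-split} directly produces a competitor $G\subset B_1$, or the opposite inequality holds for all $\rho$, yielding the differential inequality $U'(\rho)\le -cU(\rho)^{(n-1)/n}$, which forces $U(1)=0$ and hence $F\subset B_1$ already. You instead avoid the case split by using the same differential inequality in contrapositive form: since the total drop of $U^{1/n}$ is at most $m^{1/n}$, the set of radii where $-\rho'>c_1\rho^{(n-1)/n}$ has measure $\le Cr_0$, so a good cutting radius $R^*\in[r_0,Cr_0]$ exists. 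Your explicit energy comparison at $R^*$ is then precisely the computation hidden inside Lemma~\ref{lem-split}, carried out with the sharper pointwise control $\Sigma\le 2c_1\mu^{(n-1)/n}$ (rather than the hypothesis $\Sigma\le\tfrac12 E(F_2)$). The payoff of your route is a single unified argument that works uniformly down to $\mu=0$; the paper's route is more modular, reusing Lemma~\ref{lem-split} and the ODE mechanism from Lemma~\ref{lem-something}. One small point: your bound $(\ell^{2n-\alpha}-1)V(F_1)\le C\mu m^{(n-\alpha)/n}$ implicitly uses $V(F_1)\le C m^{(2n-\alpha)/n}$, which follows from the bathtub bound $v_{F_1}(x)\le C|F_1|^{(n-\alpha)/n}$ but should perhaps be stated.
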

\begin{proof}
  Throughout the proof we will use the assumption $m \leq 1$.

  \medskip

  We may assume that $E(F) \leq E(B_r)$, with $|B_r| = m$, since
  otherwise we can choose $G = B_r$, which yields the assertion of the
  lemma. In particular,
  \begin{align}
    D(F) \lupref{isodef}= \frac{C}{r^{n-1}} \left(P(F) - P(B_r)
    \right) \ %
    \lupref{E}\leq \ \frac{C}{r^{n-1}} \left( V(B_r) - V(F) \right)
    \ %
    \leq \ C' r,
  \end{align}
  for some constants $C, C' > 0$ depending only on $\alpha$ and $n$, where we
  used Lemma \ref{l:dvinfty}. By the quantitative isoperimetric estimate
  \eqref{qiso} we therefore get the bound $\Delta(F,B_r) \leq C r^{1/2}$ for the
  Fraenkel asymmetry. Hence, after a suitable translation, we have $|B_r \Delta
  F| \leq C r^{n + \frac12}$. Since $|B_r \Delta F| = 2|F \backslash B_r|$, this
  in turn implies
  \begin{align} \label{qqiissoo} %
    |F \backslash B_r| \ \leq \ C r^{n + \frac12},
  \end{align}
  for some $C > 0$ depending only on $\alpha$ and $n$.

  \medskip

  For any $\rho > 0$, let $F_1 = |F \cap B_\rho|$ and $F_2 = |F \backslash
  B_\rho|$.  Note that by \eqref{qqiissoo} and for sufficiently small $m > 0$,
  the condition \eqref{om2eps1m} of Lemma \ref{lem-split} is satisfied for the
  two sets $F_1$ and $F_2$ for all $\rho > r$. We suppose that furthermore
  \begin{align} \label{est-sigma-2} %
    \Sigma \ := \ P(F_1) + P(F_2) - P(F) \ > \ \frac 12 E(F_2).
  \end{align}
  Indeed, if \eqref{est-sigma-2} is not satisfied, then Lemma \ref{lem-split}
  can be applied. Furthermore, both sets $\tilde F$ and $\hat F$ constructed in
  the proof of Lemma \ref{lem-split} are contained in a ball of radius $1$,
  which concludes the proof. Hence, we may assume in the following that
  \eqref{est-sigma-2} is satisfied for all $\rho > r$. In order to conclude the
  proof, we will apply an argument, similar to the one used in the proof of
  Lemma \ref{lem-something}. For this, we define the monotonically decreasing
  function $U(\rho) = |E \backslash B_\rho|$. Observe that by \eqref{qqiissoo},
  we have $U(\rho) \leq C\rho^{n+\frac12}$. Furthermore, as in the proof of
  Lemma \ref{lem-something}, we have
  \begin{align}
    {dU(\rho) \over d \rho} \ \leq \ - c U^{\frac {n-1}n}(\rho),
  \end{align}
  for some $c > 0$ depending only on $n$.  For $r$ sufficiently small, it then
  follows that $U(\rho) = 0$ for $\rho \geq 1$, which concludes the proof.
\end{proof}

\begin{proof}[Proof of Theorem \ref{thm-existence}]
  We choose a sequence of sets of finite perimeter $F_k$ with $|F_k| = m$ such
  that $E(F_k) \to \inf_{|F| = m} E(F)$. By Lemma \ref{lem-bounded}, we can
  choose a minimizing sequence such that these sets are uniformly bounded, i.e.
  $F_k \subset B_1(0)$. By lower semi-continuity of the perimeter, there is a
  set of finite perimeter $\Ome$ supported in $B_1(0)$ such that for some
  subsequence we have $\Delta(F_{k_j},\Omega) \to 0$ and $P(\Omega) \ \leq \
  \liminf_k P(F_k)$. Moreover, $|F_k| \to |\Omega|$ and, furthermore, by Lemma
  \ref{l:dvinfty} the nonlocal part of the energy convergences, i.e. $\NL(F_k)
  \to \NL(\Omega)$.  Hence, $|\Omega| = m$ and $E(\Omega) = \inf_{|F| = m}
  E(F)$, which concludes the proof.
\end{proof}

\section{Ball as the minimizer for small masses}
\label{s-ball}

In this section, we give the proof of Theorem \ref{thm-ball}.  For this it
is convenient to rescale length in such a way that the rescaled set $\Omega$ has
the mass $\omega_n$ of the unit ball.  We set $\lam = (\frac
m{\omega_n})^{1/n}$.
% where we recall that
%$\omega_n = \frac{\pi^{n/2}}{\Gamma(\frac n2 + 1)}$ is the volume of the
%$n$-dimensional unit ball.
We also introduce a positive parameter $\eps > 0$ by
\begin{align} \label{def-eps} %
  \eps \ := \ \lam^{n+1-\alp} \ = \ \left( {m \over \omega_n}
  \right)^{\frac{n+1-\alp}{n}}.
\end{align}
Furthermore, we set $E_\eps(\Omega_\eps) := \lam^{n-1} E(\Omega)$
where $\Omega_\eps := \lambda^{-1} \Omega$. In the rescaled variables,
this yields the following energy to be minimized:
\begin{align}
  \label{Eeps}
  E_\eps(F) \ := \ P(F) + \eps \NL(F), \qquad |F| \ = \ \omega_n,
\end{align}

Note that by Theorem \ref{thm-existence}, the minimizers of $E_\eps$
exists for all $\eps \leq \eps_1$, where $\eps_1$ is related to $m_1$
via \eqref{def-eps}. Furthermore, the regularity result in Proposition
\ref{thm-regularity} holds for the minimizers of $E_\eps$. In this
section, however, we will need to further strengthen the statement of
Proposition \ref{thm-regularity} for the minimizers of $E_\eps$ to
allow for the regularity properties that are uniform in $\eps$. For
this we apply a result of Rigot \cite[Theorem 1.4.9]{Rigot-2000} which
we summarize in the following proposition:

  \begin{proposition}
    \label{p:regEeps}
    Let $3 \leq n \leq 7$. Then there exists $\eps_1 = \eps_1(\alpha,
    n) > 0$ such that for all $\eps \leq \eps_1$ there exists a
    minimizer $\Omega_\eps$ of $E_\eps$ in \eqref{Eeps}. Furthermore,
    the set $\Omega_\eps$ is open (up to a negligible set), and there
    exists $r_0 > 0$ depending only on $\alpha$ and $n$ such that if $
    x \in \partial \Omega_\eps$, then $\Omega_\eps \cap B_{r_0}(x)$ is
    (up to a rotation) the subgraph of a function of class
    $C^{1,\frac12}$, with the regularity constants depending only on
    $\alpha$ and $n$.
  \end{proposition}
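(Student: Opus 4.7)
The plan is to reduce everything to a uniform quasi-minimality estimate for the perimeter and then invoke Rigot's regularity theorem with constants that do not degenerate as $\eps\to 0$. Existence of a minimizer $\Omega_\eps$ for $\eps$ small enough follows immediately from Theorem~\ref{thm-existence}: the rescaling relation $\eps=\lambda^{n+1-\alpha}$ with $m=\omega_n\lambda^n$ determines a one-to-one correspondence between small $\eps$ and small $m$, so choose $\eps_1$ so that the corresponding $m\le m_1$.

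The crucial step is to prove that every minimizer $\Omega_\eps$ of $E_\eps$ is a quasi-minimizer of the perimeter in the sense of \eqref{qm} with a constant $C$ that can be chosen independently of $\eps\in(0,\eps_1]$. For any competitor $G$ with $|G|=\omega_n$ and $G\triangle\Omega_\eps\subset B_r(0)$ I would repeat the computation \eqref{Equasimin2} with the extra factor $\eps$ in the nonlocal term:
\begin{align*}
P(\Omega_\eps)-P(G)\ \le\ \eps\bigl(V(G)-V(\Omega_\eps)\bigr)\ \le\ 2\eps\int_{\Omega_\eps\triangle G}\bigl(v_{\Omega_\eps}+v_G\bigr)\,dx\ \le\ C_0\,\eps\,|\Omega_\eps\triangle G|,
\end{align*}
where $C_0=4\,\sup\{\|v_F\|_{L^\infty}:|F|\le\omega_n\}$ is finite by Lemma~\ref{l:dvinfty} and depends only on $n,\alpha$. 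Taking $C:=C_0\eps_1$ gives the desired uniform quasi-minimality constant. Note that this step uses only the $L^\infty$ bound on the potential, so it is valid for the full range $\alpha\in(0,n)$ (the $W^{1,\infty}$ part of Lemma~\ref{l:dvinfty} is not needed here).

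With uniform quasi-minimality in hand, the conclusion is a direct application of Rigot's $C^{1,\frac12}$ regularity theorem \cite[Theorem 1.4.9]{Rigot-2000}: its statement produces a scale $r_0$ and Hölder norm bound that depend only on $n$ and on the quasi-minimality constant, hence only on $\alpha$ and $n$ in our situation. Since $3\le n\le 7$, Federer's dimension reduction (as invoked in Proposition~\ref{thm-regularity} through \cite[Theorem 4.5]{xia05}) forces $\mathcal{H}^k(\partial^M\Omega_\eps\setminus\partial^*\Omega_\eps)=0$ for every $k>n-8$, so the singular set is empty and every boundary point admits the asserted $C^{1,\frac12}$ local graph representation. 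The claim that $\Omega_\eps$ is open up to a negligible set then follows by taking the representative $\mathring\Omega_\eps^M$: Lemma~\ref{lem-something} (applied with mass $\omega_n$) provides a uniform density lower bound that, together with the standard two-sided density estimates for quasi-minimizers, identifies the essential interior with an open set whose topological boundary coincides with $\partial^*\Omega_\eps$.

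The main obstacle is really only the uniformity of the quasi-minimality constant in $\eps$; once one observes that the factor $\eps$ in front of $V$ absorbs the loss in the naive bound and that Lemma~\ref{l:dvinfty} furnishes an $\eps$-independent sup-norm for the potential under the fixed mass constraint $|F|=\omega_n$, Rigot's theorem and the low-dimensional absence of singularities hand over the rest of the statement without further work.
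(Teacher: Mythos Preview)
Your proposal is correct and matches the paper's intended approach. The paper does not give a detailed proof of this proposition, merely introducing it as a summary of Rigot's theorem \cite[Theorem 1.4.9]{Rigot-2000} applied in this setting; you have correctly identified and filled in the one point that requires care, namely that the quasi-minimality constant for $E_\eps$ can be chosen uniformly in $\eps\le\eps_1$ because the factor $\eps$ in front of $V$ combines with the $\eps$-independent $L^\infty$ bound on the potential from Lemma~\ref{l:dvinfty}.
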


Expressed in terms of the rescaled problem, Theorem \ref{thm-ball} takes the
form:
\begin{proposition} \label{prp-ball} %
  For all $3 \leq n \leq 7$ and for all $\alp \in (0,n-1)$ there is $\eps_0 =
  \eps_0(\alpha, n) > 0$ such that for all $\eps \leq \eps_0$ the unique (up to
  translation) minimizer of $E_\eps$ in \eqref{Eeps} is given by the unit ball.
\end{proposition}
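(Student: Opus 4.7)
The plan is to combine the quantitative isoperimetric inequality \eqref{qiso} with the uniform $C^{1,\frac12}$ regularity of Proposition \ref{p:regEeps} to reduce to a Fuglede-type coercivity estimate, which can then be absorbed against the non-local perturbation via the Lipschitz bound on $v_B$ afforded by the assumption $\alpha < n-1$ and Lemma \ref{l:dvinfty}.

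First, let $\Omega_\eps$ be a minimizer from Proposition \ref{p:regEeps} and let $B$ denote the unit ball centered at the origin, so $|B| = |\Omega_\eps| = \omega_n$. Comparing $E_\eps(\Omega_\eps) \leq E_\eps(B)$ and noting that $V(B), V(\Omega_\eps) \leq C$ by Lemma \ref{l:dvinfty}, I obtain $P(\Omega_\eps) - P(B) \leq \eps (V(B) - V(\Omega_\eps)) \leq C \eps$, so the isoperimetric deficit satisfies $D(\Omega_\eps) \leq C \eps$. By \eqref{qiso}, after a translation, $|\Omega_\eps \triangle B| \leq C \sqrt{\eps}$. Combining this $L^1$-smallness with the uniform $C^{1,\frac12}$ regularity and density estimates of Proposition \ref{p:regEeps} and Lemma \ref{lem-something} (an argument in the spirit of Fuglede \cite{Fuglede-1989}), I conclude that $\partial \Omega_\eps$ can be written as a normal graph $\{(1+\phi_\eps(\omega))\omega : \omega \in \partial B\}$ with $\|\phi_\eps\|_{C^1(\partial B)} \to 0$ as $\eps \to 0$. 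By a further translation (using invariance of $E_\eps$ under translations), I may assume that the barycenter of $\Omega_\eps$ sits at the origin.

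Having reduced to a nearly spherical set with prescribed mass and centered barycenter, the classical coercivity estimate of Fuglede \cite{Fuglede-1989} provides, for $\|\phi_\eps\|_{C^1}$ small enough,
\[
P(\Omega_\eps) - P(B) \geq c\, \|\phi_\eps\|_{H^1(\partial B)}^2 \geq c\, \|\phi_\eps\|_{L^2(\partial B)}^2 .
\]
To bound $V(B) - V(\Omega_\eps)$ from above, I apply Lemma \ref{l:posdef} with $F = B$, $G = \Omega_\eps$ and constant $c = v_0 := v_B|_{\partial B}$. Since $\alpha < n - 1$, Lemma \ref{l:dvinfty} gives $|v_B(x) - v_0| \leq L\, \dist(x, \partial B)$, and the fact that $v_B$ is radial with $v_B \geq v_0$ inside $B$ and $v_B \leq v_0$ outside $B$ arranges the signs of the two terms in Lemma \ref{l:posdef} to yield
\[
V(B) - V(\Omega_\eps) \leq 2 L \int_{B \triangle \Omega_\eps} \dist(x, \partial B) \, dx \leq C\, \|\phi_\eps\|_{L^2(\partial B)}^2,
\]
where the last inequality is Fubini in spherical coordinates, using the normal-graph representation to write the tubular neighborhood $B \triangle \Omega_\eps$ explicitly.

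Finally, combining these two bounds with the minimality $P(\Omega_\eps) - P(B) \leq \eps(V(B) - V(\Omega_\eps))$ gives
$c\, \|\phi_\eps\|_{L^2(\partial B)}^2 \leq C\eps\, \|\phi_\eps\|_{L^2(\partial B)}^2$,
so for any $\eps \leq \eps_0 := c/(2C)$ one must have $\phi_\eps \equiv 0$, i.e.\ $\Omega_\eps = B$. The most delicate step is the $L^1$-to-$C^1$ improvement of convergence in the second paragraph: it requires the density and $C^{1,\frac12}$ estimates to hold with constants independent of $\eps$, which is where the restrictions $n \leq 7$ (for Rigot's smoothness theory) and $\alpha \in (0, n-1)$ (for a bounded non-local Lagrange multiplier) both enter essentially.
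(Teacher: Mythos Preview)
Your proposal is correct and follows essentially the same route as the paper: quantitative isoperimetric inequality plus the uniform $C^{1,\frac12}$ regularity of Proposition~\ref{p:regEeps} to write $\partial\Omega_\eps$ as a small $C^1$ normal graph (this is Lemma~\ref{lem-close} in the paper), then Fuglede's coercivity \cite{Fuglede-1989} to bound $D(\Omega_\eps)$ from below by $\|\phi_\eps\|_{L^2}^2$, and finally Lemma~\ref{l:posdef} with $c=v_0$ together with the Lipschitz bound of Lemma~\ref{l:dvinfty} (available since $\alpha<n-1$) to bound $V(B)-V(\Omega_\eps)$ from above by $C\|\phi_\eps\|_{L^2}^2$, closing the absorption argument. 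Your observation that $v_B-v_0$ has the right sign on each side of $\partial B$ is a nice touch but is not needed, since the paper simply passes to $|v_B-v_0|$ before applying the Lipschitz bound.
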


We prove this proposition using of a sequence of lemmas.  As a first step, we
note that in the limit $\eps \to 0$, minimizers $\Omega_\eps$ of $E_\eps$
converge in $L^1$-norm sense (after a suitable translation) to the unit ball.
\begin{lemma} \label{lem-areaclose} %
  Let $\Omega_\eps$ be a minimizer for \eqref{Eeps}. Then there
  is $\eps_0 = \eps_0(\alpha, n) > 0$ such that for some $C =
  C(\alpha, n) > 0$ and all $\eps \leq \eps_0$, we have
  \begin{align} \label{est-areaclose} %
    \Delta(\Omega_\eps, B_1) \ \leq \ C \eps^{1/2}.
  \end{align}
\end{lemma}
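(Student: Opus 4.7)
The plan is to prove this by a direct comparison of $\Omega_\eps$ with the unit ball $B_1$, followed by an application of the quantitative isoperimetric inequality \eqref{qiso}.

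First, I would exploit minimality: since $|B_1| = \omega_n$, testing $E_\eps$ against $B_1$ yields
\begin{align*}
  P(\Omega_\eps) - P(B_1) \ \leq \ \eps \bigl( V(B_1) - V(\Omega_\eps) \bigr).
\end{align*}
The next step is to bound the right-hand side by $C\eps$ for some constant depending only on $\alpha$ and $n$. Since $\alpha < n-1 < n$, Lemma \ref{l:dvinfty} applies to any set of mass $\omega_n$ (in particular to $B_1$ and to $\Omega_\eps$), giving $\|v_{B_1}\|_{L^\infty} \leq C$. Therefore $V(B_1) = \int_{B_1} v_{B_1} \, dx \leq C \omega_n$, and since $V(\Omega_\eps) \geq 0$, we conclude $V(B_1) - V(\Omega_\eps) \leq C$.

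Combining these two observations gives $P(\Omega_\eps) \leq P(B_1) + C\eps = n\omega_n + C\eps$. Plugging into the definition \eqref{isodef} of the isoperimetric deficit, and using $|\Omega_\eps| = \omega_n$, yields
\begin{align*}
  D(\Omega_\eps) \ = \ \frac{P(\Omega_\eps)}{n \omega_n} - 1 \ \leq \ C \eps,
\end{align*}
after possibly enlarging $C$. The quantitative isoperimetric inequality \eqref{qiso} then immediately gives $\Delta(\Omega_\eps, B_1) \leq C_n \sqrt{D(\Omega_\eps)} \leq C \eps^{1/2}$, as claimed.

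There is no serious obstacle here; the key input is merely the uniform $L^\infty$ bound on $v_F$ from Lemma \ref{l:dvinfty}, which is exactly what restricts the argument to the regime $\alpha < n-1$ (in the regime $\alpha \geq n-1$ the potential is no longer uniformly bounded independently of the shape, necessitating a different strategy). The threshold $\eps_0$ needed only ensures that a minimizer exists (via Theorem \ref{thm-existence} and Proposition \ref{p:regEeps}); no further smallness of $\eps$ is required for the estimate itself.
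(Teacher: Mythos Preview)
Your proof is correct and follows essentially the same route as the paper: compare with $B_1$ by minimality, bound $V(B_1)-V(\Omega_\eps)$ via the uniform $L^\infty$ estimate on the potential from Lemma~\ref{l:dvinfty}, deduce $D(\Omega_\eps)\leq C\eps$, and apply \eqref{qiso}.

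One small correction to your commentary: the restriction $\alpha<n-1$ is not needed for this lemma. The first statement of Lemma~\ref{l:dvinfty} gives $\|v_F\|_{L^\infty}\leq C$ for \emph{all} $\alpha\in(0,n)$; only the $W^{1,\infty}$ bound requires $\alpha<n-1$. So your claim that ``in the regime $\alpha\geq n-1$ the potential is no longer uniformly bounded'' is inaccurate---boundedness survives, Lipschitz continuity does not. Accordingly, Lemma~\ref{lem-areaclose} (and the paper's proof of it) holds for the full range $\alpha\in(0,n)$, which is indeed how the paper uses it in Lemma~\ref{lem-close}.
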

\begin{proof} %
  The proof is based on an application of the quantitative isoperimetric
  inequality given by \eqref{qiso} \cite[Theorem 1.1]{FuscoMaggiPratelli-2008}.
  Since $\Omega_\eps$ is a minimizer, we have $E_{\eps}(\Omega_\eps) \leq
  E_{\eps}(B_1)$, which yields
  \begin{align} \label{area-1} %
    D(\Omega_\eps) %
    &\leq \frac\eps{n\omega_n} \Big( \int_{B_1(x_0)} \int_{B_1(x_0)}
    {1 \over |x - y|^\alpha} \ dx \, dy - \int_{\Omega_\eps}
    \int_{\Omega_\eps} {1 \over |x
      - y|^\alpha} \ dx \, dy \Big) \notag \\
    &\leq \ {\eps \over n} \sup_{x \in \mathbb R^n} \int_{B_1(0)} {1
      \over |x - y|^\alpha} \, dy + {\eps \over n} \sup_{x \in \mathbb
      R^n} \int_{\Omega_\eps} {1
      \over |x - y|^\alpha} \, dy   \notag \\
    & \leq \ {2 \eps \over n} \Big( \omega_n + \int_{B_1(0)} {1 \over
      |y|^\alpha} \, dy \Big) \leq C \eps,
  \end{align}
  where in the last line we split the integration over $\mathbb R^n
  \backslash B_1(x)$ and $B_1(x)$, respectively. Together with
  \eqref{qiso}, this concludes the proof.
\end{proof}

In fact, using the regularity result in Proposition \ref{p:regEeps}, we can
  show that for sufficiently small $\eps$ every minimizer $\Omega_\eps$ may be
  represented by the subgraph of a map $\rho: \partial B_1(0) \to \partial \Omega$,
  with $\partial \Omega_\eps$ close to $\partial B_1(0)$ in the $C^1$ norm on
  $\partial B_1(0)$: %The argument is based on \ed{the} regularity estimate in
% Proposition \ref{p:regEeps}%due to Rigot \cite{Rigot-2000}.
\begin{lemma} \label{lem-close} %
  For all $3 \leq n \leq 7$, $\alp \in (0,n)$ and $\delta > 0$ there is $\eps_0
  = \eps_0(\alpha, n, \delta) > 0$ such that for all $\eps \leq \eps_0$ every
  minimizer $\Ome_\eps$ of \eqref{Eeps} is given by (up to a negligible set)
  \begin{align} \label{global-graph} %
    \Omega_\eps - x_0 \ = \ \{ x \ : \ |x| < \ 1 + \rho(x/|x|) \} && %
    \text{for some $\rho \in C^{1,\frac12}(\partial B_1(0))$},
  \end{align}
  with $\| \rho \|_{W^{1,\infty}(\partial B_1(0))} \leq \delta$ and
  $x_0 \in \mathbb R^n$ the barycenter of $\Omega$.
  % Furthermore, there exists a constant $C > 0$ depending only on
  % \ed{$n$ and $\alp$} such that
  % \begin{align} \label{lip-close} %
  %   \nco{\phi} \ \leq \ C \eps^{\frac{(n-\gam)(n+\gam) +
  %       \gam}{2n(n+\gam)}}, && %
  %   \nco{\nabla \phi} \ \leq \ C \eps^{\frac \gam{2(n+\gam)}}.
  % \end{align}
\end{lemma}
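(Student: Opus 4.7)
The plan is to argue by compactness and contradiction, combining the $L^1$-closeness from Lemma~\ref{lem-areaclose} with the uniform $C^{1,\frac12}$ boundary regularity provided by Proposition~\ref{p:regEeps}. Suppose the conclusion fails; then there exist $\delta > 0$ and a sequence $\eps_k \to 0$ of minimizers $\Omega_k := \Omega_{\eps_k}$ (translated so that their barycenters are at the origin) such that no $\Omega_k$ admits a radial graph representation over $\partial B_1(0)$ with $\|\rho\|_{W^{1,\infty}} \le \delta$. By Lemma~\ref{lem-areaclose}, $|\Omega_k \triangle B_1| \to 0$; since the barycenter of $B_1$ is the origin and the barycenter map is continuous in $L^1$, the translation in \eqref{Fasym} can be absorbed into an $o(1)$ shift, so that after this correction we may assume $\chi_{\Omega_k} \to \chi_{B_1}$ in $L^1(\mathbb{R}^n)$.

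First, I would upgrade $L^1$-convergence to Hausdorff convergence of $\partial \Omega_k$ to $\partial B_1(0)$. The uniform interior and exterior density estimates contained in Proposition~\ref{p:regEeps} (they follow from the uniform $C^{1,\frac12}$ graph property) yield a constant $c_0>0$ with $|\Omega_k \cap B_r(x)| \ge c_0 r^n$ and $|B_r(x) \setminus \Omega_k| \ge c_0 r^n$ for every $x \in \partial \Omega_k$ and every $r \le r_0$. Combining these with $|\Omega_k \triangle B_1| \to 0$ rules out boundary points of $\Omega_k$ that stay bounded away from $\partial B_1(0)$ and, conversely, prevents holes in $\Omega_k$ near $\partial B_1(0)$; this gives $\mathrm{dist}_H(\partial \Omega_k, \partial B_1(0)) \to 0$.

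Second, I would use the uniform $C^{1,\frac12}$ parametrizations to get local $C^1$-convergence of the boundaries. For any $x_0 \in \partial B_1(0)$ and any $k$ large, Hausdorff closeness gives a point $x_k \in \partial \Omega_k$ with $x_k \to x_0$; Proposition~\ref{p:regEeps} expresses $\partial \Omega_k$ in $B_{r_0}(x_k)$ as the graph of a function $f_k$ over the tangent hyperplane at $x_k$ with a uniform $C^{1,\frac12}$ bound. By Arzel\`a--Ascoli, a subsequence converges in $C^1$, and the $L^1$-closeness forces the limit to be the tangent plane to $\partial B_1(0)$ at $x_0$. Thus the outer unit normal $\nu_{\Omega_k}$ converges uniformly to the radial direction on $\partial \Omega_k$, with a modulus depending only on $\alpha$ and $n$.

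Third, I would assemble these local statements into the global radial graph \eqref{global-graph}. Once $\nu_{\Omega_k}$ is uniformly close to $x/|x|$ on $\partial \Omega_k$, the radial function $r \mapsto \chi_{\Omega_k}(r\theta)$ is monotone along each ray from the origin for $k$ large, so $\partial \Omega_k$ intersects each ray in exactly one point and defines a single-valued $\rho_k \in C^{1,\frac12}(\partial B_1(0))$. Hausdorff convergence gives $\|\rho_k\|_{L^\infty}\to 0$, while the $C^{1,\frac12}$-compactness applied now in the radial chart shows $\|\nabla \rho_k\|_{L^\infty}\to 0$. This contradicts $\|\rho_k\|_{W^{1,\infty}(\partial B_1(0))} > \delta$, completing the proof. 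The main technical obstacle is the passage from local graphicality over tangent planes to the global radial graph representation: one must verify that the normal-is-almost-radial property, which is a pointwise consequence of local $C^1$-convergence, holds uniformly over $\partial \Omega_k$ with a modulus independent of $k$, and rule out any disconnected or multiply-covered components of $\partial \Omega_k$ using the uniform density bounds together with indecomposability from Lemma~\ref{lem-indec}.
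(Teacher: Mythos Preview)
Your proposal is correct and uses the same ingredients as the paper (the $L^1$-closeness of Lemma~\ref{lem-areaclose} together with the uniform $C^{1,\frac12}$ regularity of Proposition~\ref{p:regEeps}), but it is organized differently. You argue by contradiction and compactness: extract a sequence, upgrade $L^1$-convergence to Hausdorff convergence via the uniform two-sided density estimates, then use Arzel\`a--Ascoli to obtain local $C^1$-convergence of the boundaries and infer that the normals become nearly radial, which forces a global radial graph. The paper instead gives a direct quantitative argument at fixed small~$\eps$: it shows that any boundary point whose tangent plane is not close to that of $\partial B_1(x_1)$ would contribute a definite amount $cr^n$ to $|\Omega_\eps\triangle B_1(x_1)|$, contradicting Lemma~\ref{lem-areaclose}. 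The most visible substantive difference is in how multiple boundary sheets are excluded: the paper observes that each sheet contributes perimeter close to $P(B_1)$ and then invokes the elementary bound $P(\Omega_\eps)\le E_\eps(B_1)\le P(B_1)+C\eps$ to force a single sheet, whereas you rely on the normal-is-almost-radial property to conclude that each ray meets $\partial\Omega_\eps$ once. The perimeter-count argument is shorter and sidesteps exactly the uniformity issue you flag as your ``main technical obstacle''; conversely, your compactness route is more routine to execute and avoids having to make the $C^1$-closeness estimate explicitly quantitative.
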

\begin{proof}
  By Proposition \ref{p:regEeps}, for every ball of radius $r < r_0$
  and every $x \in \partial \Omega_\eps$ the set $\partial \Omega_\eps
  \cap B_r(x)$ approaches the tangent hyperplane $\Pi_r(x)$ at $x$ as
  $r \to 0$ in the $C^1$ sense, with moduli of continuity depending
  only on $\alpha$ and $n$. Similarly, for any $x_1$ fixed the set
  $\partial B_1(x_1) \cap B_r(x)$ is either empty or is close in the
  $C^1$ sense to a hyperplane $\tilde \Pi_r(x)$ for all sufficiently
  small $r > 0$. Therefore, for every $c > 0$ sufficiently small
  depending only on $\alpha$ and $n$ there exist $r \in (0, r_0)$
  depending only on $\alpha$, $n$ and $c$ such that if $B_1(x_1)$ is a
  ball that minimizes the Fraenkel asymmetry $\Delta(\Omega_\eps,
  B_1)$, we have $\Delta(\Omega_\eps, B_1) \, \omega_n = |\Omega_\eps
  \triangle B_1(x_1)| \geq | (\Omega_\eps \triangle B_1(x_1)) \cap
  B_r(x)| \geq c r^n$, unless $\partial B_1(x_1) \cap B_r(x) \not=
  \varnothing$ and $\Pi_r(x) - x$ is sufficiently close to $(\tilde
  \Pi_r(x) - x)/r$ in $B_1(0)$ in the Hausdorff sense (with closeness
  controlled by $c$). Closeness of $\partial \Omega_\eps$ and
  $\partial B_1(x_1)$ in the $C^1$ sense controlled by $\eps$ then
  follows by Lemma \ref{lem-areaclose} for all $\eps \leq \eps_0$,
  with $\eps_0 > 0$ depending only on $\alpha$ and $n$.

  \medskip

  Thus, locally $\partial \Omega_\eps$ may be represented by a $C^1$
  map from an open subset of $\partial B_1(0)$ to $\mathbb R^n$. In
  fact, by the uniform $C^1$ closeness of $\partial\Omega_\eps$ to
  $\partial B_1(x_1)$ this map can be extended from the neighborhood
  of each point $x \in \partial \Omega_\eps$ to a global $C^1$ map
  from $\partial B_1(0)$ to $\partial \Omega_\eps$. This implies that
  $\partial \Omega_\eps$ may be represented by a union of finitely
  many connected components consisting of non-intersecting graphs of
  $C^1$ maps from $\partial B_1(0)$ to $\mathbb R^n$, with the degree
  of closeness controlled by $\eps$ and with the perimeter of each
  component approaching $P(B_1(x_1))$ as $\eps \to 0$. Since by the
  minimizing property of $\Omega_\eps$ and positivity of the non-local
  term we have $P(\Omega_\eps) \leq E_\eps(\Omega_\eps) \leq
  E_\eps(B_1(x_1)) \leq P(B_1(x_1)) + C \eps$ for some $C > 0$, we
  conclude that for all $\eps \leq \eps_0'$ with $\eps_0' > 0$
  depending only on $\alpha$ and $n$ the set $\partial \Omega_\eps$
  consists of only one connected component and, therefore,
  $\Omega_\eps$ can be represented by \eqref{global-graph}.

\medskip

Finally, since $\Omega_\eps$ is a simply connected open set whose
boundary $\partial \Omega_\eps$ is close in $C^1$ sense to $\partial
B_1(x_1)$, the quantity $|x_0 - x_1|$, where $x_0$ is the barycenter
of $\Omega_\eps$, is small and controlled by $\eps$ as well. The
statement of the lemma then follows from the $C^1$ closeness of
$B_1(x_0)$ to $B_1(x_1)$.
\end{proof}

We next use a bound on the isoperimetric deficit of almost spherical sets
derived by Fuglede \cite{Fuglede-1989}:
\begin{lemma}
  For all $3 \leq n \leq 7$, all $\alpha \in (0, n)$ and all
    $\eps \leq \eps_0$, where $\eps_0 = \eps_0(\alpha, n)$, the
  minimizer $\Omega_\eps$ of \eqref{Eeps} satisfies
  \begin{align} 
    \label{delD} 
    \| \rho \|^2_{L^2(\partial B_1(0))} + \| \nabla \rho
    \|^2_{L^2(\partial B_1(0))} \ \leq \ C D(\Omega_\eps),
  \end{align}
  where $\rho$ is as in Lemma \ref{lem-close}, for some universal $C >
  0$.
\end{lemma}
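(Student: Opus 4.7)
The plan is to apply Fuglede's classical estimate for the isoperimetric deficit of nearly spherical sets. By Lemma \ref{lem-close}, for $\eps \leq \eps_0$ sufficiently small we can write $\Omega_\eps - x_0$ as a radial graph over $\partial B_1(0)$ with $\|\rho\|_{W^{1,\infty}}$ as small as desired, and $x_0$ is the barycenter of $\Omega_\eps$. The bound then follows from a Taylor expansion of the perimeter and volume functionals in $\rho$, combined with a spectral decomposition on the sphere.

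First, using spherical coordinates I would record the exact formulas
\begin{align*}
  |\Omega_\eps| \ &= \ \frac{1}{n} \int_{\partial B_1(0)} (1+\rho)^n \, d\HH^{n-1}, \\
  P(\Omega_\eps) \ &= \ \int_{\partial B_1(0)} (1+\rho)^{n-2} \sqrt{(1+\rho)^2 + |\nabla_\tau \rho|^2} \, d\HH^{n-1},
\end{align*}
where $\nabla_\tau$ is the tangential gradient on the sphere. The mass constraint $|\Omega_\eps| = \omega_n$ expanded to second order yields $\int_{\partial B_1(0)} \rho \, d\HH^{n-1} = -\frac{n-1}{2}\int_{\partial B_1(0)} \rho^2 \, d\HH^{n-1} + O(\|\rho\|_\infty \|\rho\|_{L^2}^2)$, while the barycenter condition $\int_{\Omega_\eps - x_0} y \, dy = 0$ gives, for each coordinate $k$, $\int_{\partial B_1(0)} \omega_k \rho(\omega)\, d\HH^{n-1}(\omega) = O(\|\rho\|_\infty \|\rho\|_{L^2}^2)$.

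Next, Taylor-expanding the perimeter integrand to second order in $\rho$ and $\nabla_\tau \rho$ and substituting the volume constraint to cancel the linear terms, I would obtain
\begin{align*}
  n\omega_n D(\Omega_\eps) \ = \ P(\Omega_\eps) - P(B_1) \ = \ \frac{1}{2} \int_{\partial B_1(0)} \left( |\nabla_\tau \rho|^2 - (n-1) \rho^2 \right) d\HH^{n-1} + R,
\end{align*}
with $|R| \leq C\|\rho\|_{W^{1,\infty}} \|\rho\|_{H^1(\partial B_1(0))}^2$. Decomposing $\rho = \sum_{k \geq 0} \rho_k$ into spherical harmonics with $-\Delta_{\mathbb S^{n-1}} \rho_k = k(k+n-2)\rho_k$, the quadratic form evaluates to $\sum_{k \geq 0} (k(k+n-2) - (n-1)) \|\rho_k\|_{L^2}^2$. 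For $k \geq 2$ one has $k(k+n-2) \geq 2n$, hence the form is bounded below by $\frac{1}{n+1}\sum_{k \geq 2}(k(k+n-2)+1)\|\rho_k\|_{L^2}^2$, which controls $\|\rho\|_{H^1}^2$ up to the contributions from $\rho_0$ and $\rho_1$.

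The final step is to absorb the $k=0$ and $k=1$ contributions using the volume and barycenter constraints, which bound $\|\rho_0\|_{L^2}$ and $\|\rho_1\|_{L^2}$ by $C\|\rho\|_\infty\|\rho\|_{L^2}^2$; together with the error term $R$, these are absorbed into the main term once $\|\rho\|_{W^{1,\infty}}$ is small enough, which is guaranteed by Lemma \ref{lem-close} after shrinking $\eps_0$. The main obstacle is the bookkeeping required to verify that every low-frequency and cubic-remainder contribution is quadratically small in $\rho$ in $L^2$ or $L^\infty$, so that absorption into $\|\rho\|_{H^1}^2$ is legitimate; once this is done, the inequality $\|\rho\|_{H^1(\partial B_1(0))}^2 \leq C D(\Omega_\eps)$ follows.
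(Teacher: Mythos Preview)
Your proposal is correct and follows the same route as the paper: both rely on Fuglede's estimate for nearly spherical sets, with the paper simply citing \cite[Theorem 1.2]{Fuglede-1989} as a black box while you reproduce its proof via the spherical-harmonics expansion and the volume/barycenter constraints to kill the $k=0,1$ modes. Your bookkeeping is essentially right (minor point: the barycenter constraint gives $\|\rho_1\|_{L^2} \leq C\|\rho\|_{L^2}^2$ rather than $C\|\rho\|_\infty\|\rho\|_{L^2}^2$, but this is equally good for absorption).
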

\begin{proof}
  Since $\Omega_\eps$ is near the ball in $C^1$-norm, choosing $\delta > 0$ in
  Lemma \ref{lem-close} sufficiently small we can apply the result by Fuglede in
  \cite[Theorem 1.2]{Fuglede-1989} to yield the estimate.
  % It says the following: Let $\Omega$ be a
 %  nearly spherical domain $\Omega$ in $\R^n$, given as the subgraph of the
 %  radial function $\phi$. Let $D(\Omega)$ be the isoperimetric deficit of $\Omega$. Then
 %  $\nlt{\phi}^2 + \nlt{\nabla \phi}^2 \ \leq \ D(\Omega)$ and by interpolation
 %  In particular, As a consequence
 %  of this theorem, the assertion of the lemma follows.
\end{proof}

\begin{proof}[Proof of Proposition \ref{prp-ball}]
  By Proposition \ref{p:regEeps}, there exists a minimizer
  $\Omega_\eps$ of $E_\eps$ if $\eps$ is sufficiently small.
  Furthermore, the set $\Omega_\eps$ satisfies the conclusions of
  Lemma \ref{lem-close}. Since $\Omega_\eps$ is a minimizer, we have
  $E_\eps(\Omega_\eps) \leq E_\eps(B_1(x_0))$, where $x_0$ is the
  barycenter of $\Omega_\eps$, which is equivalent to
  \begin{align} \label{dOmballal11} %
    D(\Omega) \leq \frac\eps{n\omega_n} \left( V(B_1(x_0)) - V(\Omega) \right).
  \end{align}
 %  Let $u$ and $u^B$ be the characteristic functions of $\Omega$ and
 %  $B_1(x_0)$, respectively, and let $v^B$ be as in \eqref{vB}. Then,
 %  since the non-local kernel is positive definite \ed{(*** proof?
 %    ***)}, and since $\int_{\R^n} (u^B - u) \, dx = 0$ \ed{(*** why?
 %    ***)}, we have
 %  \begin{align} %\label{Enlal1} %
 %    \hspace{6ex} & \hspace{-6ex} %
 %    \int_{B_1(x_0)} \int_{B_1(x_0)} {1 \over |x - y|^\alpha} \ dx \, dy -
 %    \int_\Omega \int_\Omega {1 \over |x - y|^\alpha} \ dx \, dy \notag
 %    \\
 %    &= \ 2 \int_{\R^n} v^B(x - x_0) (u^B(x) - u(x)) \, dx \\
 %    &\qquad- \int_{\R^n} \int_{\R^n} {(u^B(x) - u(x)) (u^B(y) - u(y)) \over |x -
 %      y|^\alpha} \ dx \, dy
 %    \notag \\
 %    & \leq 2 \int_{\R^n} (v^B(x - x_0) - v_0) (u^B(x) - u(x)) \, dx,
 % \end{align}
  On the other hand, choosing $c = v_0$ in Lemma \ref{l:posdef}, where $v_0$ is
  as in \eqref{v-est}, and applying Lemma \ref{l:dvinfty}, we obtain
  \begin{align} \label{Enlal1} %
    \hspace{2ex} & \hspace{-2ex} V(B_1(x_0)) - V(\Omega) \notag \\
    &\upref{VvFvG}\leq 2 \left( \int_{B_1(x_0) \backslash \Omega}
      (v^B(x - x_0) - v_0) \, dx - \int_{\Omega \backslash
        B_1(x_0) } (v^B(x - x_0) - v_0) \, dx  \right) \notag \\
    & \leq 2 \int_{\Omega \triangle B_1(x_0)} |v^B(x - x_0) - v_0| \, dx \notag \\
    & \leq \ C \int_{\partial B_1(x_0)} \left( \int_0^{\rho(x)} t \,
      dt \right) d \mathcal H^{n-1}(x) \ %
    \leq \ C' \nltL{\rho}{\partial B_1(x_0)}^2
   % &\leq 2 \ncoL{v^B(\cdot - x_0) - v_0}{\Omega \triangle B_1(x_0)}
   % |\Omega \triangle B_1(x_0)| \notag \\
   % &\leq C \delta \ncoL{v^B - v_0}{B_{1+\delta}(0) \backslash
   %   B_{1-\delta}(0)},
  \end{align}
  for some $C, C' > 0$ depending on $\alpha$ and $n$.
  % On
  % the other hand, by Lemma \ref{l:dvinfty}, We have $|v^B - v_0| \leq
  % C \delta$ in $B_{1+\delta}(0) \backslash B_{1-\delta}(0)$, for some
  % $C_\alp > 0$. 
  Combining this inequality with \eqref{delD},
  \eqref{dOmballal11} and \eqref{Enlal1}, we get
  \begin{align}
    \label{Dal1}
    c \nlt{\rho}^2 \ \leq D(\Omega) \ \leq \ C \eps \nlt{\rho}^2
  \end{align}
  for some universal $c > 0$ and some $C > 0$ depending only $\alpha$
  and $n$. Therefore, as long as $\eps$ is small enough, we have
  $D(\Omega) = 0$. This implies that $\Omega = B_1(x_0)$, thus
  concluding the proof of the proposition.
\end{proof}

 \section{Non-existence and equipartition of energy}
\label{s-non}  

We now establish Theorem \ref{thm-scaling}.  The following interpolation
estimate is a generalization of a corresponding two-dimensional result, proved
in \cite[Eq. (5.3)]{KnuepferMuratov-2011b}:
\begin{lemma}[Interpolation] \label{lem-interpolation} %
  For any $u \in BV(\R^n) \cap L^\infty(\R^n)$, we have
  \begin{align} \label{int} %
    \int_{\R^n} u^2 \, dx \ \leq \ C
    \ncoL{u}{\R^n}^{\frac{n-\alpha}{n+1-\alpha}} \left(
      \int_{\R^n}|\nabla u| \, dx
    \right)^{\frac{n-\alpha}{n+1-\alpha}} \left( \int_{\R^n}
      \int_{\R^n} \frac{u(x)u(y)}{|x-y|^\alpha} \ dx dy
    \right)^{\frac{1}{n+1-\alpha}},
  \end{align}
  for some $C > 0$ depending only on $\alpha$ and $n$.
\end{lemma}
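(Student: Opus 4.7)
The plan is to prove the inequality first in the special case $u = \chi_F$ for a set of finite perimeter $F$, reducing the lemma to the set-level estimate
\begin{equation*}
  |F|^{n+1-\alpha} \leq C\, P(F)^{n-\alpha}\, V(F),
\end{equation*}
and then bootstrap to general $u \geq 0$ via a layer-cake representation combined with H\"older's inequality. For the set inequality, I would fix a free length scale $r > 0$ and partition $F = F_g^r \cup F_b^r$, where $F_g^r := \{x \in F : |F \cap B_r(x)| \geq \tfrac{1}{2}\omega_n r^n\}$ is the ``dense'' part and $F_b^r := F \setminus F_g^r$ is the ``sparse'' part.

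For the dense part, monotonicity of $|x-y|^{-\alpha}$ in $|x-y|$ together with the bathtub principle yields $v_F(x) \geq c_1 r^{n-\alpha}$ for every $x \in F_g^r$ (the worst case being $F \cap B_r(x)$ realized as an annulus abutting $\partial B_r(x)$), and hence $|F_g^r| \leq c_1^{-1}\, V(F)\, r^{\alpha - n}$. For the sparse part, I would select a maximal $(r/2)$-separated subset $\{x_i\} \subset F_b^r$; then the balls $B_r(x_i)$ cover $F_b^r$ with dimensional bounded overlap. Because $|F \cap B_r(x_i)| < \tfrac{1}{2}\omega_n r^n$, the relative isoperimetric inequality in $B_r(x_i)$ gives $|F \cap B_r(x_i)|^{(n-1)/n} \leq C\, P(F, B_r(x_i))$; combined with the trivial bound $|F \cap B_r(x_i)|^{1/n} \leq C r$ and summed over $i$, this produces $|F_b^r| \leq C r P(F)$. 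Adding the two bounds and optimizing via $r^{n+1-\alpha} \sim V(F)/P(F)$ delivers the set inequality.

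To handle general $u$, set $A := \|u\|_{L^\infty}$ and use $\int_{\R^n} u^2\, dx = 2 \int_0^A t\, |E_t|\, dt$ with $E_t := \{u > t\}$. Inserting the set inequality, $|E_t| \leq C P(E_t)^a V(E_t)^b$ with $a := \frac{n-\alpha}{n+1-\alpha}$ and $b := \frac{1}{n+1-\alpha}$ (so $a + b = 1$), and writing $t P(E_t)^a V(E_t)^b = (tP(E_t))^a (tV(E_t))^b$, H\"older with conjugate exponents $1/a$ and $1/b$ yields
\begin{equation*}
  \int_0^A t P(E_t)^a V(E_t)^b\, dt \leq \left(\int_0^A t P(E_t)\, dt\right)^{\!a} \left(\int_0^A t V(E_t)\, dt\right)^{\!b}.
\end{equation*}
The co-area formula together with $t \leq A$ bounds the first factor by $A \int |\nabla u|\, dx$, while the layer-cake identity $V(u) = \int_0^A\!\int_0^A V(E_s, E_t)\, ds\, dt$ combined with the monotonicity $V(E_s,E_t) \geq V(E_{\max(s,t)})$ yields $V(u) \geq 2\int_0^A t V(E_t)\, dt$. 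Assembling these estimates gives exactly the claimed inequality.

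The main obstacle is the set case: the covering and bounded-overlap bookkeeping needed to turn the pointwise relative isoperimetric estimate into a global bound on $|F_b^r|$ must be set up carefully, and the optimal choice of $r$ is what forces the specific exponents appearing in the lemma. Once the set inequality is in hand, the layer-cake and H\"older manipulations in the second step are routine.
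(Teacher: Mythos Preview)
Your argument is correct for nonnegative $u$, and since the paper applies the lemma only to characteristic functions (in the proof of Theorem~\ref{thm-scaling}), this suffices for every use made of it. The paper itself supplies no details beyond citing the two-dimensional analogue and calling the extension ``straightforward'', so there is no detailed argument in the paper to compare against. A more direct route---likely closer in spirit to the cited proof, and one that also covers signed $u$ as the lemma is stated---bypasses the set case and the layer-cake step entirely: fix $r>0$ and a smooth even mollifier $\phi_r(z)=r^{-n}\phi(z/r)$ supported in $B_r$, and split $\int u^2=\int u\,(u*\phi_r)+\int u\,(u-u*\phi_r)$. The second term is at most $\|u\|_{L^\infty}\|u-u*\phi_r\|_{L^1}\leq Cr\,\|u\|_{L^\infty}\int|\nabla u|$ by a standard $BV$ estimate. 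For the first, the pointwise bound $\phi_r(z)\leq Cr^{\alpha-n}|z|^{-\alpha}$ on its support gives $\int u(u*\phi_r)\leq Cr^{\alpha-n}\iint u(x)u(y)|x-y|^{-\alpha}\,dx\,dy$ when $u\geq 0$; for signed $u$ the same conclusion follows on the Fourier side from $\widehat{\phi}(r\xi)\leq C(r|\xi|)^{\alpha-n}$ together with the positivity of $|\widehat u|^2$. Optimizing in $r$ then yields \eqref{int} in one stroke. Your density-decomposition argument for sets (via the relative isoperimetric inequality and a covering) and the subsequent layer-cake extension are perfectly valid, but the monotonicity $V(E_s,E_t)\geq V(E_{\max(s,t)})$ that you use relies on the nestedness of superlevel sets, which is why your proof as written needs $u\geq 0$.
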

\begin{proof}
  Follows by a straightforward extension of the proof in
  \cite[(5.3)]{KnuepferMuratov-2011b}.
\end{proof}

Using the result in Lemma \ref{lem-interpolation}, we are ready to
give the proof of Theorem \ref{thm-scaling}.

\begin{proof}[Proof of Theorem \ref{thm-scaling}]
  The lower bound is a consequence of Lemma
  \ref{lem-interpolation}. Indeed, for any set of finite perimeter $F$
  with $|\Omega| = m$, an application of \eqref{int} with $u$ as the
  characteristic function of $F$ yields
  \begin{align} \label{lbest7} %
    m \ \leq \ C P^{\frac{n-\alpha}{n+1-\alpha}}(F)
    \NL^{\frac{1}{n+1-\alpha}}(F) \ %
    \leq \ C E(F).
  \end{align}
  for some constant $C > 0$ depending only on $\alpha$ and $n$; the lower bound
  follows. Since the estimate \eqref{int} is multiplicative, the assertion on
  equipartition of energy in \eqref{eq:1} follows as well. The proof of the
  upper bound can be shown by explicit construction of a collection of balls
  sufficiently far apart. The argument proceeds similarly as in \cite[Lemma
  5.1]{KnuepferMuratov-2011b}.
\end{proof}

We now turn to the proof of Theorem \ref{thm-nonexistence}. We proceed
by first establishing a lemma about the spatial extent of minimizers
for large masses.

\begin{lemma} \label{lem-diam} %
  Let $\Omega$ be a minimizer of $E$ with $|\Omega| = m$ and $m \geq 1$. Then
    \begin{align}
      \label{dmM}
      c m^{\frac 1\alpha} \leq \diam \overline{\Omega}^M \leq C m,
    \end{align}
    for some $C, c > 0$ depending only on $\alpha$ and $n$.
\end{lemma}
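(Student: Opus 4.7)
The plan is to handle the two bounds separately, relying on the uniform density estimate (Lemma \ref{lem-something}), indecomposability of minimizers (Lemma \ref{lem-indec}), and the energy scaling from Theorem \ref{thm-scaling}.

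For the \textbf{lower bound}, Theorem \ref{thm-scaling} gives $E(\Omega) \leq Cm$ for $m \geq 1$, hence $V(\Omega) \leq Cm$. Setting $D := \diam \overline{\Omega}^M$ and noting that $|x-y| \leq D$ for a.e.\ $x,y \in \Omega$,
\begin{align*}
V(\Omega) \ = \ \int_\Omega \int_\Omega \frac{dx\,dy}{|x-y|^\alpha} \ \geq \ \frac{m^2}{D^\alpha}.
\end{align*}
Combining the two estimates immediately yields $D \geq c\, m^{1/\alpha}$.

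For the \textbf{upper bound} I would use a covering-and-connectivity argument. By greedy selection, pick points $y_1,\ldots,y_N \in \overline{\Omega}^M$ with $\overline{\Omega}^M \subset \bigcup_i B_2(y_i)$ and $|y_i - y_j| \geq 2$ for $i \ne j$; the process terminates because $\Omega$ is essentially bounded (Lemma \ref{lem-indec}). Since $m \geq 1$, Lemma \ref{lem-something} gives $|\Omega \cap B_1(y_i)| \geq c$ for every $i$, and the disjointness of the balls $B_1(y_i)$ forces $N \leq m/c$. Next form the auxiliary graph $G$ on $\{y_1,\ldots,y_N\}$ with $y_i \sim y_j$ iff $|y_i - y_j| < 4$, and claim $G$ is connected. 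If not, let $I_1 \sqcup I_2$ be a partition of the vertex set into nonempty components and set $U_j := \bigcup_{i \in I_j} B_2(y_i)$; then $\dist(U_1,U_2) > 0$, while $\Omega \subset U_1 \cup U_2$ up to a null set. Setting $\Omega_j := \Omega \cap U_j$ one finds $|\Omega_j| \geq c$ and $P(\Omega_1)+P(\Omega_2)=P(\Omega)$, contradicting indecomposability. Hence $G$ is connected, any two vertices are linked by a path of at most $N-1$ edges of length less than $4$, and for any $x \in B_2(y_i)$, $x' \in B_2(y_j)$ lying in $\overline{\Omega}^M$,
\begin{align*}
|x - x'| \ \leq \ 2 + 4(N-1) + 2 \ = \ 4N \ \leq \ 4m/c,
\end{align*}
which gives the desired bound.

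The only step requiring care is the perimeter identity $P(\Omega_1)+P(\Omega_2)=P(\Omega)$ in the disconnected-graph scenario. It rests on the fact that $U_1$ and $U_2$ are open, separated by a positive distance, and essentially cover $\Omega$, so the reduced boundary of $\Omega$ decomposes cleanly between $U_1$ and $U_2$ with no new contribution; the verification within the framework of sets of finite perimeter is standard but must be carried out explicitly.
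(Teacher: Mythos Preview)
Your lower bound is identical to the paper's. For the upper bound the paper takes a more direct, one-dimensional route: after rotating so that the diameter is realized (up to constants) along the $e_1$-axis, it observes that indecomposability forces $\overline{\Omega}^M$ to meet every slab $\{3j-1 < x \cdot e_1 < 3j\}$ for $j = 1, \ldots, N$ with $N \approx d/3$ (otherwise $\Omega$ would split across an empty slab, contradicting Lemma~\ref{lem-indec}), picks a point $x_j \in \overline{\Omega}^M$ in each slab, and applies Lemma~\ref{lem-something} to the disjoint balls $B_1(x_j)$ to obtain $m \geq cN \geq c'd$. Your covering-plus-graph argument reaches the same conclusion by a genuinely different path; it is a bit longer but arguably more robust, since no preferred direction has to be chosen and the structure of the argument would survive in settings where a single linear slicing is unavailable.

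One small correction: with the edge relation $|y_i - y_j| < 4$, two balls $B_2(y_i)$ and $B_2(y_j)$ from different components could be tangent, so your claim $\dist(U_1, U_2) > 0$ need not hold as stated. This is harmless for the perimeter identity, because what actually matters is the inclusion $\overline{\Omega}^M \subset U_1 \cup U_2$: any point of $\partial U_1 \cup \partial U_2$ lies outside $U_1 \cup U_2$ (the $U_j$ being open and disjoint), hence has upper density zero in $\Omega$ and cannot belong to $\partial^* \Omega$; the reduced boundary therefore decomposes cleanly between $U_1$ and $U_2$. Alternatively, simply widen the edge relation to $|y_i - y_j| < 5$ to recover a strictly positive gap.
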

\begin{proof}
  We first recall that by Lemma \ref{lem-indec} the minimizer $\Omega$ is
  essentially bounded and indecomposable; in particular $d := \diam \overline
  {\Omega}^M < \infty$. By Theorem \ref{thm-scaling} and in view of \eqref{v},
  we get for some $C > 0$ depending only on $\alpha$ and $n$ that
  \begin{align}
    \frac{m^2}{d^\alpha} \ \leq \ V(\Omega) \leq E(\Omega) \ \leq \
    Cm,
  \end{align}
  which implies the lower bound in \eqref{dmM}.

  \medskip

  We next turn to the proof of the upper bound in \eqref{dmM}.
  Clearly, we may assume that $d > 5$. Furthermore, without loss of
  generality we may assume that there exist $x^{(1)}, x^{(2)} \in
  \overline{\Omega}^M$ such that $x^{(1)} \cdot e_1 < 1$ and $x^{(2)}
  \cdot e_1 > d - 1$. Let $N$ be the largest integer smaller than
  $\frac{d-2}{3}$.  Since $\Omega$ is indecomposable, there exist $N$
  disjoint balls $B_1(x_j)$, $j = 1, \ldots, N$ with $x_j \in
  \overline{\Omega}^M$ such that $3 j - 1 < x_j \cdot e_1 < 3 j$. The
  upper bound now follows from an application of Lemma
  \ref{lem-something}:
  \begin{align}
    m \ = \ |\Omega| \ \geq \ \sum_{j=1}^N |B_1(x_j) \cap \Omega| \ \geq \ cN \
    \geq \ c' d,
  \end{align}
  for some universal $c' > 0$.
\end{proof}

As a direct consequence of Lemma \ref{lem-diam}, we get
\begin{corollary}
  \label{cor-non1}
  The statement of Theorem \ref{thm-nonexistence} holds if $\alpha <
  1$.
\end{corollary}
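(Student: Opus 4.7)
The plan is to combine directly the two bounds in Lemma \ref{lem-diam} and exploit the fact that when $\alpha < 1$, the lower bound grows strictly faster in $m$ than the upper bound. Specifically, suppose for contradiction that a minimizer $\Omega$ of $E$ with $|\Omega|=m$ exists for arbitrarily large $m$. Then Lemma \ref{lem-diam} gives
\begin{align*}
  c\, m^{1/\alpha} \ \leq \ \diam \overline{\Omega}^M \ \leq \ C\, m,
\end{align*}
for constants $C, c > 0$ depending only on $\alpha$ and $n$, valid whenever $m \geq 1$.

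Rearranging, this forces $c\, m^{1/\alpha - 1} \leq C$. Since $\alpha < 1$, the exponent $1/\alpha - 1$ is strictly positive, so the left-hand side diverges to $+\infty$ as $m \to \infty$. Hence there exists $m_2 = m_2(\alpha,n)$ such that for every $m \geq m_2$ no minimizer $\Omega$ with $|\Omega| = m$ can exist, which is precisely the conclusion of Theorem \ref{thm-nonexistence} in the regime $\alpha < 1$.

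There is no real obstacle here: the argument is purely a matter of extracting the contradiction from the scaling mismatch between the lower and upper diameter bounds already established in Lemma \ref{lem-diam}. The only thing worth noting is that the hypothesis $m \geq 1$ required for Lemma \ref{lem-diam} is automatically satisfied once we take $m_2 \geq 1$, and the threshold $m_2$ is then determined explicitly from $c$, $C$, and $\alpha$ by solving $c\, m_2^{1/\alpha - 1} = C$.
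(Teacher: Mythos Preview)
Your argument is correct and is exactly what the paper has in mind: the corollary is stated as ``a direct consequence of Lemma \ref{lem-diam},'' and you have simply spelled out that direct consequence by comparing the two diameter bounds and using $1/\alpha - 1 > 0$ for $\alpha < 1$.
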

It remains to give the proof of Theorem \ref{thm-nonexistence} in the
case $\alpha \in [1,2)$.
\begin{proof}[Proof of Theorem \ref{thm-nonexistence}] 
  By Corollary \ref{cor-non1}, we may assume that $\alpha \in
  [1,2)$. Arguing by contradiction, we assume that for every $m_2 > 0$
  there exists a minimizer $\Omega$ of $E$ with $|\Omega| = m$ for
  some $m > m_2$. We define $d := \diam \overline{\Omega}^M < \infty$
  and 
    \begin{align}
      \label{Ut}
      U(t) := |\Omega \cap \{ x \in \mathbb R^n : 0 < x \cdot e_1
    < t \}| \qquad \forall t > 0.
    \end{align}
    According to Lemma \ref{lem-diam}, we may assume that $4 < d < \infty$. By a
    suitable rotation and translation of the coordinate system, we may further
    assume that there are points $x^{(1)}$, $x^{(2)} \in \overline{\Omega}^M$
    with $x^{(1)} \cdot e_1 < 1$ and $x^{(2)} \cdot e_1 > d - 1$, and
    \begin{align}
      \label{UdUd2}
      U(d) = m \qquad \text{and} \qquad U(\tfrac12 d) \leq \tfrac12 m.
    \end{align}
    Let $\Pi_t$ be the hyperplane $\Pi_t : = \{ x \in \R^n: \ x \cdot e_1 = t
    \}$.
% We also
%  introduce the measure of the cross-section $\rho(s) := \mathcal H^{n-1}(\Pi_s
 % \cap \mathring{\Omega}^M) \in L^\infty((0, d))$ \hk{is this defined for all
 %   $x$?}.  By Fubini's theorem
%  \begin{align} \label{eq:14} %
%    m \ = \ \int_0^d \rho(s) \ ds,
%  \end{align}
%  where we may assume without loss of generality that
%  \begin{align}
%    \label{md2}
%    \int_0^{d/2} \rho(s) \ ds \ \leq \ \frac m2.
%  \end{align}
    For a given $t \in (0, \tfrac12 d)$, we cut the set $\Omega$ by the
    hyperplane $\Pi_t$ into two pieces, which are then moved apart to a large
    distance $R > 0$, with the new set denoted as $\Omega_t^R$. In general, the
    perimeter of $\Omega^R_t$ is larger than that of $\Omega$, while the
    non-local part of the energy of the new set is smaller. To make this
    statement quantitative, as in the proof of Lemma \ref{lem-something} we
    define
    \begin{align} \label{nex-rho} %
      \rho(t) \ := \ \frac12 \left( P(\Omega_t^R) - P(\Omega) \right).
  \end{align}
  Using the same arguments as in the proof of Lemma
  \ref{lem-something}, we have $\rho(t) =
  \HH^{n-1}(\mathring{\Omega}^M \cap \Pi_t)$. Hence, by Fubini's
  Theorem we obtain
  \begin{align}
    \label{PPrho}
    % P(\Omega_t^R) - P(\Omega) & \ = \ 2 \rho(t), \\
    \NL(\Omega_t^R) - \NL(\Omega) \ %
    &\leq \ - \frac m{2 d^\alpha} U(t) +
    K(R) \notag \\
    &= \ - \frac m{2 d^\alpha} \int_0^t \rho(t') \, dt' + K(R),
  \end{align}
  where $K(R) \to 0$ as $R \to \infty$. Combining \eqref{nex-rho} and
  \eqref{PPrho}, we obtain
  \begin{align}
    E(\Omega_t^R) - E(\Omega) \ %
    \ \leq \ 2\rho(t) - \frac m{2 d^\alpha} \int_0^t \rho(t') \,
    dt' + K(R).
  \end{align}
  Since $\Omega$ is assumed to be a minimizer, we have in particular
  \begin{align} \label{sode} %
    2 \rho(t) \ \geq \ \frac m{2 d^\alpha} \int_0^t \rho(t') \, dt' && %
    \forall t \in (\tfrac14 d, \tfrac12 d).
  \end{align}
  From \eqref{sode} and the fact that $U \in C^{0,1}([0,d])$ we infer that
  \begin{align} \label{eq:18} %
    {d U(t) \over dt} \ \geq \ \frac m{4 d^\alpha} U(t) && %
    \text{for a.e. $t \in (\tfrac14 d, \tfrac12 d)$},
  \end{align}
  Integrating this expression yields
  \begin{align} \label{eq:19} %
    U(t) \ \geq \ U(\tfrac 14 d) e^{m ( t - \tfrac 14 d
      ) / (4 d^\alpha)}.
  \end{align}
  Note that by Lemma \ref{lem-something} we have $U \left( \tfrac14 d
  \right) \geq |\Omega \cap B_1(x^{(1)})| \geq c > 0$ which in view of
  \eqref{eq:19} implies $U(\tfrac12 d) \ \geq \ C e^{\frac{1}{16} m
    d^{1-\alpha}}$. This contradicts the inequality in \eqref{UdUd2},
  thus concluding the proof.
\end{proof}

\section*{Acknowledgements}

The authors would like to acknowledge valuable discussions with
R. V. Kohn, M. Novaga and S. Serfaty. C. B. M. was supported, in part,
by NSF via grants DMS-0718027 and DMS-0908279. H. K. would like to
thank the program SFB 611 ``Singular Phenomena and Scaling in
Mathematical Models'' for support.

\bibliographystyle{plain}

\bibliography{all}

\end{document}